\definecolor{darkblue}{rgb}{0.0, 0.0, 0.8}
\newcommand{\PreserveBackslash}[1]{\let\temp=\\#1\let\\=\temp}
\newcolumntype{C}[1]{>{\PreserveBackslash\centering}p{#1}}
\newcolumntype{R}[1]{>{\PreserveBackslash\raggedleft}p{#1}}
\newcolumntype{L}[1]{>{\PreserveBackslash\raggedright}p{#1}}
\newtheorem{claim}{Claim}
\newtheorem{proposition}{Prop.}
\newtheorem{thm}{Theorem}
\newtheorem{lemma}{Lemma}
\newtheorem{remark}{Remark}
\newtheorem{definition}{Definition}
\newcommand{\R}{\mathbb{R}}
\newcommand{\Imm}{\operatorname{Imm}}
\newcommand{\on}[1]{\operatorname{#1}}
\newcommand{\gsi}{{g}}
\newcommand{\sgsi}{{G}}
\newcommand{\gcc}{\widetilde{g}}
\newcommand{\sgcc}{\widetilde{G}}
\newcommand{\Rdnst}{\R^{d\times n}_*}
\newcommand{\Rdn}{\R^{d\times n}}
\newcommand{\pathofcurves}{\gamma}
\title{Sobolev Metrics on  Spaces of Discrete Regular Curves }
\author{Jonathan Cerqueira, Emmanuel Hartman, Eric Klassen and Martin Bauer}
\begin{document}
\maketitle
\begin{abstract}
Reparametrization invariant Sobolev metrics on spaces of regular curves have been shown to be of importance  in the field of mathematical shape analysis. For practical applications, one usually discretizes the space of smooth curves and considers the induced Riemannian metric  on a finite dimensional approximation space. Surprisingly, the theoretical properties of the corresponding finite dimensional Riemannian manifolds have not yet been studied in detail, which is the content of the present article. Our main theorem concerns metric and geodesic completeness and mirrors the results of the infinite dimensional setting as obtained by Bruveris, Michor and Mumford. 
\end{abstract}

\tableofcontents

\section{Introduction}
\paragraph{Motivation and Background:}
Reparametrization invariant Sobolev metrics on spaces of regular curves play a central role in the field of mathematical shape analysis. Due to their reparametrization invariance, these metrics descend to Riemannian metrics on  spaces of unparametrized curves, which are of relevance in mathematical shape analysis and data analysis. Examples include applications where one is interested  in the shape of planar objects (represented by their boundary curves); see \cite{srivastava2016functional,younes2010shapes,michor2007overview,bauer2014overview} and the references therein. Motivated by their appearance in these applications there has been a large interest in studying their mathematical properties. Michor and Mumford~\cite{michor2005vanishing,mumford2006riemannian,bauer2012vanishing} showed a surprising degeneracy of the simplest such metric; namely they proved that the reparametrization invariant $L^2$-metric, i.e., the Sobolev metric of order zero, induces a degenerate distance function. This purely infinite dimensional phenomenon renders this metric unsuited for mathematical shape analysis, as it assigns a zero distance between any two curves and thus cannot distinguish between different shapes. For higher order metrics this degeneracy disappears and it has been shown in many applications that they lead  to meaningful notions of distance~\cite{srivastava2010shape,sundaramoorthi2007sobolev,needham2020simplifying,bauer2017numerical,celledoni2018shape}. As a result, these metrics can be  used to define a mathematical framework for statistical shape analysis on these spaces of curves~\cite{pennec2019riemannian}. A natural question that arises in this context concerns the existence of minimizing geodesics, i.e., whether the space of  regular curves equipped with these Riemannian metrics is a geodesically complete and/or geodesically convex space. For metrics of order two or higher a positive  answer to this question has been found by Bruveris, Michor and Mumford~\cite{bruveris2014geodesic,bruveris2017completeness}; more recently, it has been shown by one of the authors and collaborators~\cite{bauer2024completeness} that 3/2 is actually the critical index for this property, i.e., for a Sobolev metric of order greater than 3/2 the resulting space is geodesically complete, whereas there always exists geodesics that leave the space in finite time if the order is smaller than 3/2. The behavior at the critical value 3/2 is still open.

\paragraph{Main Contributions:}
For practical applications, one usually discretizes the space of smooth curves and considers the induced Riemannian metric  in a corresponding finite dimensional approximation space. Discretizations that  have been considered include approximating curves via piecewise linear functions~\cite{bernal2016fast,srivastava2016functional,bauer2018soliton},
B-spline discretizations~\cite{bauer2017numerical} or finite Fourier series approximations~\cite{beutler2024discrete}.  In this paper, we will discretize a curve as a finite sequence of points in Euclidean space, so that the space of curves is the space of these sequences. 
Using methods of discrete differential geometry, we will define a class of metrics on this finite dimensional space that are motivated by and analogous to the class of reparametrization invariant Sobolev metrics mentioned above for the infinite dimensional space of smooth curves.  To our surprise, these rather natural finite dimensional Riemannian manifolds have not yet been studied in much detail, with the only exception being the case of the homogenous Sobolev metric of order one, where the space of PL curves can be viewed as a totally geodesic submanifold of the infinite dimensional setting~\cite{bauer2018soliton};  note that a similar result for more general Sobolev metrics  is not true. 

Considering these finite dimensional approximation manifolds leads to a natural question, which is the starting point of the present article: 
\begin{center}
\emph{Which properties of the infinite dimensional geometry are mirrored in these finite dimensional geometries?}
\end{center}
As vanishing geodesic distance is a purely infinite dimensional phenomenon — every finite dimensional Riemannian manifold admits a non-degenerate geodesic distance function~\cite{lang1972differential} —  one cannot hope to observe the analogue of this result in our setting. The main result of the present article shows, however, that some of these discretizations do indeed capture the aforementioned completeness properties, cf.~Theorem~\ref{mainbit}. In addition to these theoretical results, we present in Section~\ref{sec:numerics} selected numerical examples showcasing the effects of the order of the metric on the resulting geodesics. Finally, for the special case of triangles in the plane, we study the Riemannian curvature of the space of triangles and observe that it explodes near the singularities of the space, i.e., where two points of the triangle come together. 

\paragraph{Conclusions and future work:} In this article we studied discrete Sobolev type metrics on the space of discrete regular curves in Euclidean space (where we identified this space as a space of sequences of points) and showed that these geometries mirror several properties of their infinite dimensional counterparts. In future work we envision several distinct research directions: first, we have restricted ourselves in the present study to integer order Sobolev metrics. In future work it would be interesting to perform a similar analysis also for the class of fractional (real) order Sobolev metrics, such as those studied in~\cite{bauer2024completeness}.
Secondly, we aim to study stochastic completeness of these geometries.  For extrinsic metrics on the  two-landmark space it has  recently been shown by Habermann, Harms and Sommer~\cite{habermann2024long} that the resulting space is stochastically complete, assuming certain conditions on the kernel function. We believe that a similar approach could be  applied successfully to the geometries of the present article, which would be of interest in several applications where stochastic processes on shape spaces play a central role. Finally, we would like to study similar questions in the context of reparametrization invariant metrics on the space of surfaces: in this case geodesic completeness in the smooth category, i.e., in the infinite dimensional setting, is wide open and we hope to get new insights for this extremely difficult open problem by studying its finite dimensional counterpart. 

\paragraph{Acknowledgements:} M.B was partially
supported by NSF grants DMS–2324962, DMS-1953244 and CISE 2426549. M.B and J.C. were partially supported by the BSF under grant 2022076. E.H. was partially supported by NSF grant DMS-2402555.
\section{Reparametrization invariant Sobolev metrics on the space of smooth curves}
In this section we will recall some basic definitions and results regarding the class of reparametrization invariant Sobolev metrics on the space of smooth, regular (immersed) curves. 
We begin by defining the set of smooth immersions of the circle $S^1$ into the space $\R^d$:
$$\Imm(S^1,\R^d)=\{c\in C^\infty(S^1, \R^d) : |c'(\theta)|\ne 0, \forall \theta\in S^1\}.$$
Here we identify $S^1$ with the interval $[0,1]$ with its ends identified. The set $\Imm(S^1,\R^d)$ is an open subset of the Fr\'echet space $C^\infty(S^1,\R^d)$ and thus can be considered as an infinite dimensional Fr\'echet manifold using a single chart.
We let $h,k$ denote our tangent vectors, which belong to
$$T_c\on{Imm}(S^1,\R^d)\cong C^\infty(S^1,\R^d).$$
Next, we consider the space of orientation-preserving smooth self-diffeomorphisms of the circle $$\on{Diff}(S^1)=\{\varphi\in C^\infty(S^1,S^1):\varphi\text{ is bijective and } \varphi'(\theta)>0, \forall\theta\in S^1\},$$
which is an infinite dimensional Fr\'echet Lie group and acts on 
$\Imm(S^1,\R^d)$ from the right 
via the map $(c,\varphi)\mapsto c\circ\varphi$.

The principal goal of the present article is to study Riemannian geometries on $\Imm(S^1,\R^d)$. To introduce our class of Riemannian metrics we will first need to introduce some additional notation: we  denote by $D_\theta$ the derivative with respect to $\theta$ and let $D_s=\frac{1}{|c'(\theta)|}D_\theta$ and $ds=|c'(\theta)|d\theta$ be arc-length differentiation and integration respectively. Furthermore, we let $\ell(c):=\int_{S^1}|c'(\theta)|d\theta$ denote the  length of a curve $c$. Using these notations, we are ready to define the $m$-th order, reparametrization invariant Sobolev metric on $\Imm(S^1,\R^d)$:
\begin{definition}[Reparametrization invariant Sobolev Metrics]
  Let $c\in\Imm(S^1,\R^d)$, $m\in \mathbb{Z}_{\geq 0}$, and $h,k\in T_c\Imm(S^1,\R^d)$. The $m$-th order Sobolev metric on $\Imm(S^1,\R^d)$ is then given by
  \begin{equation}\label{smoothmetricdef}
    \sgsi^m_c(h,k):=\int_{S^1}\frac{\langle h,k\rangle}{\ell(c)^3}+\ell(c)^{-3+2m}\langle D_s^m h,D_s^m k\rangle ds.
  \end{equation}
\end{definition}
\begin{remark}[Scale invariant vs. non-scale invariant metrics]
In the above definition we used length dependent weights which allowed us to define a scale-invariant version of the Sobolev metric of order $m$. Alternatively we could have considered the constant coefficient Sobolev metric
  \begin{equation}
    \sgsi^m_c(h,k):=\int_{S^1}\langle h,k\rangle+\langle D_s^m h,D_s^m k\rangle ds.
  \end{equation}
and its discrete counterpart.
Almost all of the results of the present article hold also for this class of metrics, albeit with minor adaptions in the proof of the main completeness result, cf. Appendix~\ref{sec:appendix:constant}.
\end{remark}

We start by collecting several useful properties of the above defined family of Riemannian metrics:
\begin{lemma}\label{smoothproperties}
  Let $m\geq 0$ and let $\sgsi^m$ be the Riemannian metric as defined in~\eqref{smoothmetricdef}. Let $c\in \Imm(S^1,\R^d)$ and $h,k\in T_c\Imm(S^1,\R^d)$. Then we have:
    \begin{enumerate}
  \item The metric $\sgsi^m$ is invariant with respect to reparametrizations, rescalings, rotations, and translations; i.e. for $\varphi\in\on{Diff}(S^1)$, $\lambda\in\R^+$, $R\in \on{SO}(\R^d)$ and $\mathbf{v}\in\R^d$ we have 
    \begin{align*} 
      \sgsi_c^m(h,k)&=\sgsi_{c\circ\varphi}^m(h\circ\varphi,k\circ\varphi)=\sgsi_{\lambda c}^m(\lambda h,\lambda k)=\sgsi_{c+\mathbf{v}}^m(h,k)=\sgsi_{Rc}^m(Rh,Rk).
    \end{align*}
    We note in the above the action of translation $(+\mathbf{v})$ only affects the tangent space in which $h,k$ lie, but not the vectors $h$ and $k$, since the derivative of translation is the identity map.
  \item The metric $\sgsi^m$ is equivalent to any metric of the form
    $$\widetilde{\sgsi}^m(h,k):=\int_{S^1}\sum_{j=0}^m a_j\ell(c)^{-3+2j}\langle D_s^j h,D_s^j k\rangle ds$$ for $a_0,a_m>0$ and $a_j\ge 0$ for $j=1,\ldots,m-1$; i.e., there exists a $C>0$ such that for all $c\in \Imm(S^1,\R^d)$ and all $h\in T_c\Imm(S^1,\R^d)$
    $$\frac{1}{C}\widetilde{\sgsi}^m_c(h,h)\le \sgsi^m_c(h,h)\le C \widetilde{\sgsi}^m_c(h,h).$$
  \end{enumerate}
\end{lemma}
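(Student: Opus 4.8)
The statement splits into two independent tasks, and I would prove them separately. Part (1) is a bookkeeping computation: I would record how the three ingredients of \eqref{smoothmetricdef} — the length $\ell(c)$, the arc-length derivative $D_s=\tfrac{1}{|c'|}D_\theta$, and the measure $ds=|c'|\,d\theta$ — transform under each of the four group actions, and then substitute into the metric. Under a reparametrization $c\mapsto c\circ\varphi$ with $\varphi\in\on{Diff}(S^1)$, the chain rule and $\varphi'>0$ give $|(c\circ\varphi)'|=(|c'|\circ\varphi)\varphi'$, whence $\ell(c\circ\varphi)=\ell(c)$ by substitution, $D_s^{c\circ\varphi}(f\circ\varphi)=(D_s^c f)\circ\varphi$, and $ds_{c\circ\varphi}=(|c'|\circ\varphi)\varphi'\,d\theta$; iterating the derivative identity and changing variables $u=\varphi(\theta)$ produces the claimed invariance. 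Under rescaling $(c,h,k)\mapsto(\lambda c,\lambda h,\lambda k)$ I would use $\ell(\lambda c)=\lambda\ell(c)$, $D_s^{\lambda c}=\lambda^{-1}D_s^{c}$, and $ds_{\lambda c}=\lambda\,ds_c$, and simply check that the powers of $\lambda$ cancel term by term (for the $j$-th order summand the three factors contribute exponents $-3+2j$, $2-2j$, and $1$, summing to $0$). Rotation and translation are immediate, since $R\in\on{SO}(\R^d)$ is a constant isometry — so $|(Rc)'|=|c'|$, $D_s^m(Rh)=R\,D_s^m h$, and $R$ preserves $\langle\cdot,\cdot\rangle$ — while $(c+\mathbf v)'=c'$ leaves every ingredient of the metric untouched.

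For Part (2) I would first dispose of the easy inequality: every summand of $\widetilde{\sgsi}^m$ is nonnegative and $\sgsi^m$ consists of exactly the $j=0$ and $j=m$ summands, so dropping the intermediate terms yields $\widetilde{\sgsi}^m_c(h,h)\ge\min(a_0,a_m)\,\sgsi^m_c(h,h)$, i.e. $\sgsi^m\le\min(a_0,a_m)^{-1}\widetilde{\sgsi}^m$. The substance is the reverse bound, which requires controlling the intermediate-order terms $\int_{S^1}\ell(c)^{-3+2j}|D_s^j h|^2\,ds$ (for $0<j<m$) by orders $0$ and $m$. My plan is to use invariance to eliminate the dependence on $c$ altogether: the computation of Part (1) applies verbatim to each summand, so $\widetilde{\sgsi}^m$ is itself scale- and reparametrization-invariant, and it therefore suffices to establish the equivalence for curves normalized to constant speed $|c'|\equiv1$ and unit length $\ell(c)=1$. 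For such curves $D_s=D_\theta$, $ds=d\theta$, and every length weight equals $1$, so the two forms collapse to the $c$-independent Sobolev expressions $\sgsi^m_c(h,h)=\int_{S^1}|h|^2+|D_\theta^m h|^2\,d\theta$ and $\widetilde{\sgsi}^m_c(h,h)=\int_{S^1}\sum_{j=0}^m a_j|D_\theta^j h|^2\,d\theta$.

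To finish I would pass to Fourier series: writing $h(\theta)=\sum_{n\in\mathbb{Z}}\hat h(n)e^{2\pi in\theta}$, Parseval turns both forms into $\sum_{n}P(2\pi|n|)\,|\hat h(n)|^2$, with symbol $P(x)=\sum_{j=0}^m a_j x^{2j}$ for $\widetilde{\sgsi}^m$ and $P(x)=1+x^{2m}$ for $\sgsi^m$. The whole equivalence then reduces to the elementary fact that the ratio $\big(\sum_j a_j x^{2j}\big)/(1+x^{2m})$ is bounded above and below by positive constants on $[0,\infty)$, which holds because this ratio is continuous and strictly positive (numerator $\ge a_0>0$, denominator $\ge1$) with positive finite limits $a_0$ as $x\to0$ and $a_m$ as $x\to\infty$. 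Since after normalization there is no remaining $c$-dependence, the resulting constant $C$ is automatically uniform, as required.

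I expect the main obstacle to lie not in any single computation but in justifying the reduction in Part (2): one must check that every immersion admits a constant-speed reparametrization by an orientation-preserving diffeomorphism of $S^1$, and that performing this reparametrization (together with the accompanying rescaling) transforms $h$ without affecting the inequality — which is exactly where the joint invariance of both quadratic forms is essential. Should one prefer to avoid this reduction, the alternative is to prove the scale-invariant Gagliardo–Nirenberg interpolation inequality $\int_{S^1}\ell^{-3+2j}|D_s^j h|^2\,ds\le\varepsilon\int_{S^1}\ell^{-3+2m}|D_s^m h|^2\,ds+C_\varepsilon\int_{S^1}\ell^{-3}|h|^2\,ds$ directly for every $c$, but this is more laborious and the invariance argument is cleaner.
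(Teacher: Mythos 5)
Your argument is correct, and Part (1) coincides with what the paper does (the paper simply asserts ``direct computation''; your bookkeeping of how $\ell(c)$, $D_s$ and $ds$ transform, with the exponent count $(-3+2j)+(2-2j)+1=0$ for scale invariance, is exactly that computation made explicit). For Part (2), however, you take a genuinely different route from the paper. The paper proves the lower bound $\min(a_0,a_m)\,\sgsi^m\le\widetilde{\sgsi}^m$ the same way you do, but for the upper bound it invokes an interpolation/domination lemma from prior work (Lemma 4.2 of the cited reference, the smooth analogue of the paper's Lemma~\ref{metricequivalence}): each intermediate term $\int_{S^1}\ell^{-3+2j}|D_s^jh|^2\,ds$ with $j\ge1$ is dominated by a constant times the top-order term, so all coefficients can be absorbed into the $j=0$ and $j=m$ summands. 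You instead exploit the joint invariance of both quadratic forms to normalize to a unit-length, constant-speed curve, after which both forms become $c$-independent and Parseval reduces the equivalence to boundedness of the polynomial ratio $\bigl(\sum_j a_jx^{2j}\bigr)/(1+x^{2m})$ on $[0,\infty)$. Your route is self-contained and arguably more elementary (no Gagliardo--Nirenberg-type estimate is needed), at the cost of the normalization step you correctly flag: one must verify that every immersion admits an orientation-preserving constant-speed reparametrization of $S^1$ (take $\varphi=\psi^{-1}$ with $\psi(\theta)=\ell(c)^{-1}\int_0^\theta|c'|$, noting $\psi(1)=1$) and that $h\mapsto\lambda\,(h\circ\varphi)$ is a bijection of tangent spaces preserving both forms. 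The paper's route, by contrast, generalizes directly to the discrete setting where no Fourier normalization is available, which is why the paper structures the argument around the domination lemma; the alternative you sketch at the end of your proposal is essentially the paper's actual approach.
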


\begin{proof}
The proof of the invariances follows by direct computation. The second statement is implied by \cite[Lemma 4.2]{bauer2022sobolev} in a similar way as in Lemma \ref{discreteinvariances} below.\\
\end{proof}
For a Riemannian metric $g$ on a smooth manifold $\mathcal{M}$ one defines the {\it induced path length}, i.e., for $\gamma(t):[0,1]\to\mathcal{M}$ we let 
$$\L_{g}(\gamma)=\int_0^1\sqrt{g_{\gamma(t)}(D_t\gamma(t),D_t\gamma(t))}dt.$$
Using this one can consider the induced geodesic distance, which is defined via
$$d_{g}(p,q)=\inf \L_g(\gamma)$$
where the infimum is calculated over the set of piecewise $C^\infty$ paths $[0,1]\to\mathcal{M}$ with $\gamma(0)=p$ and $\gamma(1)=q$.
In finite dimensions this always defines a true distance function; for infinite dimensional manifolds, however, one may have a degenerate distance function, i.e., there may be distinct points $c_0,c_1\in \mathcal{M}$ such that $d_g(c_0,c_1)=0$, see for example~\cite{michor2005vanishing,bauer2020vanishing}. For the class of Sobolev metrics on spaces of curves this phenomenon has been studied by Michor and Mumford and collaborators and a full characterization of the degeneracy has been obtained:
\begin{lemma}[\cite{michor2005vanishing,bauer2024completeness}]
 The metric $\sgsi$ defines a non-degenerate geodesic distance function on the space of curves $\Imm(S^1,\mathbb R^d)$ if and only if $m\geq 1$.
\end{lemma}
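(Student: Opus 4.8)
Since the claim is an equivalence, the plan is to establish the two implications separately. For $m\ge 1$ I would prove a positive lower bound on the geodesic distance between distinct curves, while for $m=0$ I would reproduce the Michor--Mumford vanishing phenomenon, exhibiting for \emph{any} pair $c_0,c_1$ a sequence of paths whose $\sgsi^0$-length tends to zero.

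For the degeneracy at $m=0$, the metric reduces to $\sgsi^0_c(h,h)=\ell(c)^{-3}\int_{S^1}|h|^2\,ds$, and the mechanism is the grating construction: I would replace a curve by a ``crinkled'' approximation with the same image but length inflated by a large factor $N$ (by inserting $N$ small teeth), so that the weight $\ell^{-3}$ makes subsequent motion extremely cheap. The steps are: (i) show the crinkling itself can be performed along a path of $\sgsi^0$-length $O(N^{-1/2})$; (ii) show that, once crinkled, the curve can be translated or deformed toward its target at $\sgsi^0$-length $O(N^{-1/2})$; and (iii) chain these moves and let $N\to\infty$, using reparametrization invariance to identify curves with equal image but different parametrization at no cost. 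The delicate length bookkeeping in (i)--(ii) is the principal obstacle on this side; it is intrinsically tied to the $\ell^{-3}$ weighting (equivalently to the constant-coefficient $L^2$ metric) and has no finite-dimensional counterpart.

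For non-degeneracy when $m\ge 1$, I would bound $d_{\sgsi^m}$ below by a genuine distance. An elementary first step controls the length: for a path $c(t)$ with velocity $h=\partial_t c$ one has $\frac{d}{dt}\ell(c)=\int_{S^1}\langle D_s h,D_s c\rangle\,ds$, whence Cauchy--Schwarz and $|D_s c|=1$ give $\bigl|\frac{d}{dt}\log\ell(c)\bigr|\le\bigl(\ell(c)^{-1}\int_{S^1}|D_s h|^2\,ds\bigr)^{1/2}$. Since the right-hand side is, by Lemma~\ref{smoothproperties}, dominated by $\sqrt{\sgsi^m_c(h,h)}$ (the $j=1$ term being controlled by $\sgsi^m$), integration yields $|\log\ell(c_1)-\log\ell(c_0)|\le C\,L_{\sgsi^m}(c)$, so curves of different length sit at positive distance.

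The remaining, harder task is to separate curves of equal length. Here I would show that for $m\ge1$ the metric additionally dominates a pointwise displacement functional — for instance $\max_\theta|c(t,\theta)-c(0,\theta)|$ after an optimal reparametrization — again through a Cauchy--Schwarz estimate that becomes available precisely once a derivative $D_s h$ is penalized, which fails at $m=0$. Combining the logarithmic length bound with such a displacement bound produces a non-degenerate lower bound for $d_{\sgsi^m}$ and closes the $m\ge1$ case. I expect this final step — upgrading the cheap control of length to genuine control of the curve's position while respecting reparametrization invariance — to be the main difficulty on the non-degeneracy side.
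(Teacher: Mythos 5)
The paper does not actually prove this lemma; it is imported from \cite{michor2005vanishing,bauer2024completeness}, so the only comparison available is with those sources, and your two-sided plan does follow their structure: a grating/wrinkling construction for degeneracy at $m=0$, and a length bound plus a pointwise displacement bound for non-degeneracy at $m\geq 1$. On the $m\geq1$ side your argument is essentially complete once the second step is made concrete: the inequality you need is the Sobolev embedding on a circle of circumference $\ell(c)$, namely $\|h\|_{L^\infty}^2\leq C\bigl(\ell(c)^{-1}\|h\|_{L^2(ds)}^2+\ell(c)\|D_sh\|_{L^2(ds)}^2\bigr)\leq C\,\ell(c)^2\,\sgsi^1_c(h,h)$, which together with your logarithmic length estimate (used to control $\ell(c(t))$ uniformly along any path of $\sgsi^m$-length at most $1$) yields $\sup_\theta|c_1(\theta)-c_0(\theta)|\leq C'\,L_{\sgsi^m}(c)$ and hence positivity. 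One correction here: since the lemma asserts non-degeneracy on the parametrized space $\Imm(S^1,\R^d)$, you must use the plain $\sup_\theta|c_1(\theta)-c_0(\theta)|$, not its infimum over reparametrizations; the latter vanishes on every pair $(c,c\circ\varphi)$ with $\varphi\in\Diff(S^1)$ and therefore cannot separate distinct parametrized curves, which is exactly what is being claimed.

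The $m=0$ half is where the real work lies, and your sketch stops where it begins. Two points deserve care. First, the cheapness of both the wrinkling and the subsequent motion rests on the scale-invariant weight (for a constant field $v$ one has $\sgsi^0_c(v,v)=|v|^2\ell(c)^{-2}$, so translating a curve of length $N\ell_0$ costs $O(N^{-1})$); your $O(N^{-1/2})$ claims should be backed by an explicit choice of tooth number and height (e.g.\ $N^2$ teeth of height $\ell_0/N$). Second, for arbitrary endpoints $c_0,c_1$ that are not translates of one another, one cannot simply ``translate toward the target'': the Michor--Mumford argument takes an arbitrary connecting path and replaces it by a wrinkle--move--unwrinkle path over many small time steps, and that iteration is precisely where the bookkeeping you defer actually happens. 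As an outline of the cited proofs your proposal is faithful, but both estimates you identify as ``the principal obstacle'' are left unproved.
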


The main focus on this article concerns geodesic and metric completeness properties of the corresponding space. Recall that 
a Riemannian manifold $(\mathcal{M},g)$ is \textbf{metrically complete} if it is complete as a metric space under the distance function given by the Riemannian metric. Furthermore it is called \textbf{geodesically complete} if the geodesic equation has solutions defined for all time for any initial conditions $\gamma(0)=p\in\mathcal M$ and $\gamma'(0)=v\in T_p\mathcal{M}$ and it is called  \textbf{geodesically convex} if for any two points $p,q\in\mathcal{M}$ there exists a length minimizing path connecting them. In finite dimensions  the theorem of Hopf-Rinow implies that metric and geodesic completeness are equivalent and either implies geodesic convexity, but this result famously does not hold in infinite dimensions \cite{atkin1975hopf,ekeland1978hopf,atkin1997geodesic}. 

The following theorem will characterize the metric and geodesic completeness properties of $(\Imm(S^1,\R^d),\sgsi^m)$.
To state the theorem, we will first need to introduce the space of regular curves of finite (Sobolev) regularity, i.e., for $m\geq 2$ we let
$$\mathcal{I}^m(S^1,\R^d)=\{c\in H^m(S^1,\R^d): |c'(\theta)|\ne0,\forall\theta\}.$$ 
\begin{thm}[\cite{bauer2022sobolev,bruveris2017completeness}]\label{smoothcomplete}
  Let $m\ge 0$, $c\in\Imm(S^1,\R^d)$, $h,k\in T_c\Imm(S^1,\R^d)$ and $G^m$ as defined above. Then 
  $(\Imm(S^1,\R^d),\sgsi^m)$ is a Riemannian manifold and for $m\ge 2$ the following hold:
  \begin{enumerate}[1.]
  \item The manifold $(\Imm(S^1,\R^d),G^m)$ is geodesically complete.
  \item The metric completion of $(\Imm(S^1,\R^d),G^m)$ is $(\mathcal{I}^m,G^m)$.
  \item The metric completion of $(\Imm(S^1,\R^d),G^m)$ is geodesically convex.
  \end{enumerate}
\end{thm}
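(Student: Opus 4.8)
The plan is to pass to the Sobolev completion and exploit that, although $\sgsi^m$ is only a \emph{weak} Riemannian metric on the Fréchet manifold $\Imm(S^1,\R^d)$, it becomes a \emph{strong} Riemannian metric on the Hilbert manifold $\mathcal{I}^m(S^1,\R^d)$ once $m\ge 2$: on each tangent space the inner product in~\eqref{smoothmetricdef} induces exactly the $H^m$-topology, and, crucially, $\mathcal{I}^m$ is an \emph{open} subset of $H^m(S^1,\R^d)$ because $c\mapsto c'$ is continuous from $H^m$ into $H^{m-1}\hookrightarrow C^0$ (here $m-1\ge 1$ is what makes this embedding, and hence the immersion condition $|c'|\neq 0$, open). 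A preliminary step, which I would quote from the smoothness theory of the geodesic spray for Sobolev metrics on immersions, is that the metric coefficients and their inverses are smooth, so that the geodesic spray is a smooth vector field on $T\mathcal{I}^m$; this yields local existence, uniqueness, and smooth dependence of geodesics, as well as constant speed $\|\dot\gamma\|_{\sgsi^m}$ along them.

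The heart of the argument is metric completeness of $(\mathcal{I}^m,\sgsi^m)$. First I would establish the quantitative estimates tying the geodesic distance to Sobolev data: on a $d_{\sgsi^m}$-bounded set the length $\ell(c)$ is bounded above and away from zero (controlled by the zeroth-order term $\int |h|^2\ell(c)^{-3}\,ds$), the full $H^m$-norm is bounded (controlled by the top-order term $\int \ell(c)^{-3+2m}|D_s^m h|^2\,ds$ together with the scale invariance of Lemma~\ref{smoothproperties}), and $\inf_\theta|c'(\theta)|$ stays bounded away from zero. This last fact is exactly where $m\ge 2$ is indispensable: the embedding $H^m\hookrightarrow C^1$ lets one bound the pointwise oscillation of $\log|c'|$, so a curve cannot degenerate without the distance blowing up. Given these estimates, a $d_{\sgsi^m}$-Cauchy sequence is $H^m$-Cauchy and remains in a region of uniform immersion; completeness of $H^m$ then produces a limit still lying in $\mathcal{I}^m$. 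Statement~2 follows by combining this with the density of smooth immersions in $\mathcal{I}^m$ (mollification) and the local equivalence of the $d_{\sgsi^m}$- and $H^m$-topologies, which identifies the abstract metric completion of $\Imm$ with $\mathcal{I}^m$.

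Geodesic completeness (Statement~1) I would deduce from metric completeness together with the smooth strong spray via a no-blow-up argument: a geodesic on a maximal interval $[0,T)$ with $T<\infty$ has constant speed, so by $d_{\sgsi^m}(\gamma(s),\gamma(t))\le \|\dot\gamma\|\,|s-t|$ its endpoints form a $d_{\sgsi^m}$-Cauchy family as $t\to T$ and, by metric completeness and the bounds above, converge in $\mathcal{I}^m$ to an immersion; the velocity likewise stabilizes since $(\gamma,\dot\gamma)$ is an integral curve of the smooth spray with bounded fibre component, so restarting the flow extends the geodesic past $T$, contradicting maximality. It is essential here that the limit is a genuine immersion rather than a degenerate boundary curve, which is guaranteed by the same $\inf|c'|$ estimate.

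The subtlest point is geodesic convexity (Statement~3), since metric plus geodesic completeness alone do \emph{not} force minimizers to exist for strong metrics in infinite dimensions, by Atkin's counterexample~\cite{atkin1975hopf}. I would argue by the direct method: given $c_0,c_1$, take a minimizing sequence of constant-speed paths $\gamma_n$ with $L_{\sgsi^m}(\gamma_n)\to d_{\sgsi^m}(c_0,c_1)$. These lie in a fixed $d_{\sgsi^m}$-ball, hence are bounded in $H^m$ and uniformly immersed by the completeness estimates, and equicontinuous in $t$; the compact Sobolev (Rellich) embedding $H^m\hookrightarrow H^{m-1}$ together with Arzelà–Ascoli extracts a subsequence converging to a path $\gamma$, which stays in $\mathcal{I}^m$ because $\inf|c'|$ is uniformly bounded below. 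Weak lower semicontinuity of the $H^m$-energy gives $L_{\sgsi^m}(\gamma)\le\liminf L_{\sgsi^m}(\gamma_n)=d_{\sgsi^m}(c_0,c_1)$, so $\gamma$ is a minimizer, and smoothness of the spray upgrades it to a smooth geodesic. I expect this compactness-plus-lower-semicontinuity step, and in particular verifying that the limiting curve does not leave the immersion manifold, to be the main obstacle, as it is precisely where the failure of the naive Hopf–Rinow theorem must be circumvented using the scale-invariant structure of $\sgsi^m$.
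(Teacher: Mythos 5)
Your outline is correct and follows essentially the same route as the paper, whose own ``proof'' of this theorem is only a pointer to Theorems 5.1--5.3 of \cite{bauer2022sobolev}: you accurately reconstruct that argument (strong $H^m$-metric on the completion $\mathcal{I}^m$, metric-ball bounds on $\ell(c)$, $\|c\|_{H^m}$ and $\inf_\theta|c'(\theta)|$, metric completeness yielding geodesic completeness for a strong metric with smooth spray, and the direct method with Rellich compactness and lower semicontinuity for geodesic convexity). The only point worth making explicit is that identifying the $\sgsi^m$-inner product with the $H^m$-topology rests on the interpolation estimate of Lemma~\ref{smoothproperties}, whose constants involve powers of $\ell(c)$ and are therefore uniform only on metric balls --- precisely where your length bounds enter.
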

\begin{proof}
  Proof of the above three statements can be found in \cite{bauer2022sobolev}, specifically via the proofs of Theorems 5.1, 5.2, and 5.3.
\end{proof}

Note that $(\Imm(S^1,\R^d),G^m)$ is not metrically complete for any $m$ (this follows from Statement 2 of the Theorem) despite being geodesically complete for $m\ge2$.
\begin{remark}[Fractional Order Metrics]
In the exposition of the present article, we have only focused on integer order Sobolev metrics. More recently, the equivalent of these results have been also shown for fractional (real) order Sobolev metrics, where the critical order for positivity of the geodesic distance is $\frac12$ and the critical order for completeness is $\frac32$, see~\cite{bauer2024completeness,bauer2018fractional} for more details. 
\end{remark}

\section{Discrete Sobolev Metrics on Discrete Regular Curves}
In this section we will introduce the main concept of the present article: a discrete version of the reparametrization invariant Sobolev metric.
We start this section by first defining a natural discretization of the space of immersed curves and then introducing a discrete analogue of $\sgsi^m$ that, under modest assumptions, converges to its smooth counterpart. Let $\Rdn$ denote the set of ordered $n$-tuples of points in $\R^d$ and consider the subset
$$\Rdnst=\left\{(\mathbf{x}_{1},\ldots, \mathbf{x}_{n})\in \Rdn: \mathbf{x}_i\in\R^d \text{ and }\mathbf{x}_i\neq \mathbf{x}_{i+1}, \forall  i\in \frac{\mathbb{Z}}{n\mathbb{Z}}\right\}.$$
As $\Rdnst$ is clearly an open subset of $\Rdn$ it carries the structure of an $dn$-dimensional manifold.
Furthermore, 
$\Rdn$ and $\Rdnst$ are both acted on from the right by the cyclic group of $n$ elements by cyclically permuting the indices. That is, if $j\in\frac{\mathbb{Z}}{n\mathbb{Z}}$ then
$$((\mathbf{x}_{1},\ldots, \mathbf{x}_{n}),j)\mapsto (\mathbf{x}_{1+j},\ldots, \mathbf{x}_{n+j})$$
where addition is modulo $n$.

\begin{remark}
In this remark we will observe that we can identify the above introduced space of discrete regular curves $\Rdnst$ with the set of piecewise linear, regular curves. Therefore we  let
$$\on{PL}^n(S^1,\R^d)=\left\{c\in C(S^1,\R^d): c|_{\left[\frac{i}{n},\frac{i+1}{n}\right]}\text{ is linear}\right\}$$
be the space of  continuous, piece-wise linear curves with $n$ control points from $S^1\to\R^d$
and 
$$\on{PLImm}^n(S^1,\R^d)=\left\{c\in \on{PL}^n(S^1,\R^d): c\left(\frac{i}{n}\right)\ne c\left(\frac{i+1}{n}\right),\ \forall  i\in \frac{\mathbb{Z}}{n\mathbb{Z}}\right\}$$
the open subset of piecewise linear immersions.
To identify the space of piecewise linear immersion with the previously defined space
$\Rdnst$, thereby allowing us to visualize $\Rdnst$ as a space of curves, we simply consider the map $$c\mapsto\left(c\left(\frac{0}{n}\right),c\left(\frac{1}{n}\right),\ldots,c\left(\frac{n-1}{n}\right)\right)\in\Rdnst$$
  Likewise we may identify $T_c\on{PLImm}^n(S^1,\R^d)\cong \on{PL}^n(S^1,\R^d)$ with $T_c\Rdnst\cong\Rdn$. 
\end{remark}

We now wish to define a Riemannian metric $\gsi^m$ on $\Rdnst$, which can be interpreted as a discretization of $\sgsi^m$. First, we define some notation.
Given a curve $c\in\Rdnst$ we denote our vertices as $c_i$ and let $e_i=c_{i+1}-c_i$ denote the edge beginning at $c_i$ and ending at $c_{i+1}$. We also denote the average length of the two edges meeting at the vertex $c_i$ as $\mu_i=\frac{|e_i|+|e_{i-1}|}{2}$. Let $h\in T_c\Rdnst$ denote a tangent vector to $c$ with $h_i$ denoting the $i$-th component of $h$. To define our discrete metric $\gsi^m$ we will also need to define the $j$-th discrete derivative at the $i$th vertex $D_s^j h_i$ using some ideas from discrete differential geometry \cite{crane2018discrete,crane2013digital}.

Given a curve $c\in \Rdnst$ and a function $h: \R^n\to \R^d$ via $c_i\mapsto h_i\in \R^d$ we want to give to edge $e_i$ a quantity $D_s h_i$ measuring the change from $c_i$ to $c_{i+1}$. Following the classic textbook by Crane~\cite{crane2018discrete} it is natural to set
$$D_s h_i:= (\star dh)_i,$$
where $\star$ is the discrete Hodge star operator. This maps an $k$-form on the primal curve to an $n-k$ form on the dual curve (or a $k$-form on the dual curve to an $n-k$ form on the primal curve). This does mean that our first derivatives will actually be associated with the duals of the primal edges rather than the primal edges themselves, but this is rather unimportant as they are in 1-1 correspondence to each-other.

This definition has a natural generalization to higher derivatives in $D_s^j h_i:=((\star d)^j h)_i$. For even $j$ these are 0-forms associated with vertices and for odd $j$ they are 0-forms of the dual curve associated with duals of edges. Note that differing from~\cite{crane2018discrete}, which treats general meshes, here we only have vertices and edges. Thus our dual ``mesh'' is simply a dual curve and consequently also only has dual vertices and dual edges, which are the edges and  vertices of the dual curve. In addition the dual curve is again oriented, albeit in the opposite direction compared to the primal curve

To make this definition more concrete we  expand the first and second derivative, which then reads as
\begin{align*}
  D_s h_i&:=(\star dh)_i=\frac{|e_i^*|}{|e_i|}(h_{i+1}-h_i)=\frac{1}{|e_i|}(h_{i+1}-h_i)\\
  \text{and}\\
D_s^2h_i&:=(\star d\star dh)_i=\frac{|v_i|}{|v_i^*|}(D_sh_{i}-D_sh_{i-1})=\frac{1}{\mu_i}(D_sh_{i}-D_sh_{i-1}).\end{align*}
In a similar manner one can show that this leads to the following definition for higher derivatives:
$$D_s^0 h_i:=h_i\qquad D_s^{j+1} h_i:=\begin{cases}\frac{D_s^{j}h_i-D_s^{j}h_{i-1}}{\mu_i}\quad j\notin2\mathbb{N}\\ \frac{D_s^{j}h_{i+1}-D_s^{j}h_i}{|e_i|}\quad j\in2\mathbb{N}\end{cases}.$$
A visualization of this construction can be seen in Figure~\ref{fig:discretederivpic}. This allows us to define the discrete analogous to the $m$-th order component of $\sgsi^m$.

\begin{figure}
    \centering
    \includegraphics[width=.90\textwidth]{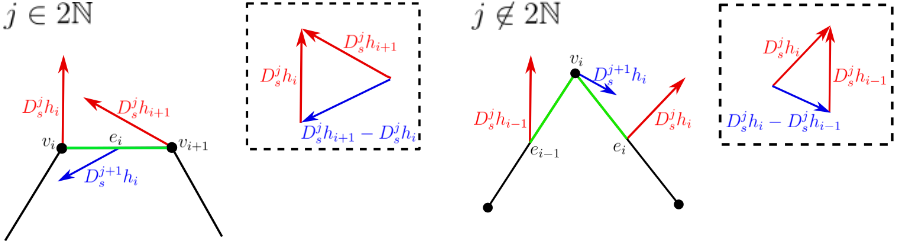}
    \caption{A pictorial representation of the discrete derivative. The derivative $D_s^j h$ can be seen here as a scaled first difference of $D_s^{j-1}h$. Both the even case and the odd case are shown. Note on the right for odd $j$ the derivative $D^{j+1}_sh_i$ is associated with the vertex $v_i$ while for even $j$ the derivative $D^{j+1}_sh_i$ is associated with the edge $e_i$.}
    \label{fig:discretederivpic}
\end{figure}

\begin{definition}
  For $c\in \Rdnst$ and $h,k\in \Rdn$ we let 
  $$\dot{\gsi}_c^m(h,k):=\sum_{i=1}^n\ell(c)^{2m-3}\langle D_s^m h_i,D_s^m k_i\rangle \mu_{i,m}\text{ where }\mu_{i,m}=\begin{cases}\mu_i\quad m\in 2\mathbb{Z}^+\\ |e_i|\quad m\notin 2\mathbb{Z}^+\end{cases}$$
  and 
    $$\gsi_c^m(h,k)=\dot{\gsi}_c^0(h,k)+\dot{\gsi}_c^m(h,k).$$
  \end{definition}
  \begin{remark}[An alternate definition of discrete derivatives]
    An alternative  definition of (higher order) discrete derivatives is given by
    $$D_s^0 h_i:=h_i\qquad D_s^{j+1} h_i:=\frac{D_s^{j}h_i-D_s^{j}h_{i-1}}{|e_i|}\quad \forall i\ge 1.$$
    At a first glance this definition seems more natural and significantly simpler. This more naive approach performs, however, significantly worse in numerical experiments as it can lead to several singularities and ill-behaviors of the corresponding discrete minimizers.  This served as the main motivation for adapting the methods of discrete exterior calculus to define the Riemannian metrics, and our numerical experiments confirmed that such a discretization behaves numerically significantly more robust. 
    
    We want to emphasize, that our main results (eg.~Theorem~\ref{mainbit}) are unaffected of this choice of discretization technique, i.e., our results still hold with this definition of discrete derivative.
  \end{remark}

  In the following lemma we further justify the above notation and show that it defines a Riemannian metric on $\Rdnst$.
\begin{lemma}\label{GmMetricDef}
  Let $n\ge 3$ and $m\ge 0$. Then
  $\gsi^m$
   is a Riemannian metric on $\Rdnst$.
\end{lemma}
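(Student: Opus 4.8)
The plan is to verify the three defining properties of a Riemannian metric — bilinearity together with symmetry, smooth dependence on the base point, and positive definiteness — of which only the last requires genuine care. Bilinearity and symmetry are immediate from the definition: each summand of $\gsi^m_c(h,k)$ has the form $\langle D_s^j h_i, D_s^j k_i\rangle$ multiplied by a scalar depending only on $c$, and the recursion defining $D_s^j$ is linear in its tangent-vector argument. Hence $\gsi^m_c(\cdot,\cdot)$ is a finite sum of rescaled Euclidean inner products and is therefore symmetric and bilinear.

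For smoothness, I would observe that on $\Rdnst$ every edge satisfies $e_i = c_{i+1}-c_i \neq 0$, so that $|e_i| = \sqrt{\langle e_i, e_i\rangle}$, the averaged lengths $\mu_i$, and the total length $\ell(c)=\sum_i |e_i|$ are all smooth and strictly positive functions of $c$. By their recursive definition, the discrete derivatives $D_s^j h_i$ are linear in the components of $h$ with coefficients that are products of reciprocals of the $\mu_i$ and $|e_i|$; since these denominators never vanish on $\Rdnst$, each coefficient is smooth in $c$. Consequently $\gsi^m_c(h,k)$ is a polynomial in the components of $h$ and $k$ whose coefficients are smooth functions of $c$, which yields the required smoothness.

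The heart of the argument is positive definiteness, where the key observation is that the two summands play different roles. Both $\dot{\gsi}^0_c(h,h)$ and $\dot{\gsi}^m_c(h,h)$ are manifestly nonnegative, since in each term every factor — the power of $\ell(c)$, the squared norm $|D_s^m h_i|^2$, and the weight $\mu_{i,m}$ — is nonnegative. The homogeneous top-order term $\dot{\gsi}^m_c$ is, however, only positive semidefinite for $m\ge 1$: the first discrete difference annihilates constant tangent vectors (if all $h_i$ coincide then $D_s^1 h_i = 0$), so $\dot{\gsi}^m_c$ vanishes on the subspace of constant vector fields and cannot by itself be a metric. This degeneracy is repaired by the zeroth-order term. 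Since $\mu_{i,0}=|e_i|>0$ and $\ell(c)>0$ on $\Rdnst$, we have $\dot{\gsi}^0_c(h,h)=\sum_i \ell(c)^{-3}|h_i|^2 |e_i|\ge 0$ with equality if and only if $h_i=0$ for every $i$, i.e.\ $h=0$. Therefore $\gsi^m_c(h,h)\ge \dot{\gsi}^0_c(h,h)>0$ whenever $h\neq 0$, establishing positive definiteness.

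The only real obstacle is thus conceptual rather than computational: one must notice that the top-order part of the metric is degenerate and that non-degeneracy is supplied entirely by the $L^2$ term, exactly mirroring the structure of the smooth metric $\sgsi^m$ in~\eqref{smoothmetricdef}. The hypothesis $n\ge 3$ guarantees that the cyclic indexing underlying the discrete derivatives is non-redundant, and the positivity estimate above goes through uniformly for all such $n$.
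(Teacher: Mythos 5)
Your proposal is correct and follows essentially the same route as the paper: positive definiteness is obtained by bounding $\gsi^m_c(h,h)$ below by the zeroth-order term $\dot{\gsi}^0_c(h,h)$, which is definite because all edge lengths are strictly positive on $\Rdnst$, and smoothness reduces to the non-vanishing of the $|e_i|$. Your additional remarks on bilinearity/symmetry and on the degeneracy of the top-order term alone are correct but not needed beyond what the paper records.
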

\begin{proof}
  We need to check non-degeneracy of the inner product and the smooth dependence on the base point $c$.
  As $g_c^m(h,h)\ge\dot{g}_c^0(h,h)$ it follows that if $g_c^m(h,h)=0$ then we also have $\dot{g}_c^0(h,h)=0$. As each edge length $|e_i|$ and $\mu_i$ are nonzero for any $c$, this can only happen if $\langle D_s^0 h_i, D_s^0 h_i\rangle=\langle h_i, h_i\rangle=0$ for all $i$. That implies $h_i=0$ for all $i$. Thus for any $m$, $g_c^m(h,h)=0$ implies $h\equiv 0$.
  
  We next check that $\gsi^m_c$ varies smoothly with respect to the base point $c$. The only suspect term in the definition is $|e_i|$. Varying $c_i$ in the $u$ direction yields the following:
  $$\frac{\partial}{\partial u}|e_i|=\frac{-u\cdot e_i}{|e_{i}|}\qquad \frac{\partial}{\partial u}|e_{i-1}|=\frac{u\cdot e_{i-1}}{|e_{i-1}|}$$
  which implies the smoothness of the metric as $|e_i|>0$ for each~$i$ of any curve in $\Rdnst$.
\end{proof}
In the next Lemma we will show that higher order metrics dominate lower order metrics, which will be of importance in the next section, where we will establish the main completeness results of the present article.
\begin{lemma}\label{metricequivalence}
  Let $m\ge 1$,  $c\in \Rdnst$ and $h\in \Rdn$, then
  \begin{equation}
    \dot{\gsi}^m_c(h,h)\le \frac{1}{4}\dot{\gsi}^{m+1}_c(h,h).
  \end{equation}
\end{lemma}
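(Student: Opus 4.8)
The plan is to prove the inequality pointwise, that is, to show that for each vertex $i$ the $m$-th order summand is bounded by one quarter of the $(m+1)$-th order summand, and then sum over $i$. The key observation is that the length factor $\ell(c)^{2m-3}$ changes to $\ell(c)^{2m-1}$ when passing from order $m$ to order $m+1$, so the ratio of length weights contributes a factor $\ell(c)^2$; since $\ell(c)=\sum_i |e_i|$ is the total length, this global factor should combine with the local edge/vertex weights $\mu_{i,m}$ in a way that produces the constant $\tfrac14$. First I would write out $\dot{\gsi}^{m+1}_c(h,h)$ explicitly using the recursive definition of $D_s^{m+1}h_i$, so that this higher derivative appears as a scaled first difference of $D_s^m h$.

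The central step is to relate $\sum_i |D_s^m h_i|^2\,\mu_{i,m}$ to $\sum_i |D_s^{m+1}h_i|^2\,\mu_{i,m+1}$. By the recursive definition, $D_s^{m+1}h_i$ is a difference quotient of consecutive values of $D_s^m h$ divided by an edge length (either $|e_i|$ or $\mu_i$ depending on the parity of $m$). So $\dot{\gsi}^{m+1}$ essentially measures a discrete derivative of the vector field $v_i := D_s^m h_i$, while $\dot{\gsi}^{m}$ measures $v$ itself. The natural tool is therefore a discrete Poincar\'e/Wirtinger-type inequality for periodic sequences: for a mean-zero (or periodic) sequence the $\ell^2$ norm is controlled by the $\ell^2$ norm of its differences. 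The factor $\ell(c)^2$ coming from the length weights is exactly what one needs to make such an inequality dimensionally consistent, since each difference quotient carries an inverse length. I expect the cleanest route is to prove a lemma stating that for any periodic sequence $v_1,\dots,v_n$ of vectors in $\R^d$,
\begin{equation}
\sum_{i=1}^n |v_i|^2\, w_i \;\le\; \frac{\ell(c)^2}{4}\sum_{i=1}^n \left|\frac{v_{i+1}-v_i}{\delta_i}\right|^2 \tilde w_i,
\end{equation}
where $w_i,\tilde w_i,\delta_i$ are the appropriate local weights, and then apply it with $v_i=D_s^m h_i$.

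The main obstacle I anticipate is twofold. First, the parity bookkeeping: the weights $\mu_{i,m}$ alternate between $\mu_i$ and $|e_i|$ depending on whether $m$ is even or odd, and the difference in the definition of $D_s^{m+1}$ is taken either backward over $\mu_i$ or forward over $|e_i|$ accordingly, so one must check that these choices align correctly to land on the constant $\tfrac14$ in \emph{both} parity cases rather than only one. Second, and more seriously, establishing the constant $\tfrac14$ \emph{sharply} (rather than some larger constant) is the delicate part: a naive triangle-inequality bound $|v_{i+1}-v_i|^2 \le 2|v_{i+1}|^2 + 2|v_i|^2$ run in reverse does not immediately give a clean constant, so I would instead look to exploit the periodicity and a summation-by-parts identity to relate $\sum |v_i|^2$ to $\sum |v_{i+1}-v_i|^2$, where the factor $\tfrac14$ should emerge from the spectral gap of the discrete periodic Laplacian or, more elementarily, from the worst-case telescoping estimate $\ell(c)\ge 2\max_i(\text{relevant partial length})$. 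Verifying that the weights $\mu_{i,m+1}$ and the length factor $\ell(c)^2$ conspire to yield precisely $\tfrac14$ — and confirming this is genuinely the right constant and not an artifact — is where I would concentrate the careful computation.
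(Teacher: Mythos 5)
Your eventual strategy---a discrete Poincar\'e--Wirtinger inequality for the periodic sequence $v_i:=D_s^m h_i$, with the constant $\tfrac14$ coming from a two-sided telescoping estimate around the cycle---is exactly the route the paper takes, so the plan is sound. But two points need repair. First, the opening framing (``prove the inequality pointwise \ldots\ for each vertex $i$ \ldots\ then sum over $i$'') cannot work: one can choose $h$ with $D_s^{m}h_{i-1}=D_s^{m}h_i\neq 0$ at some vertex, so that the $(m+1)$-st order summand at $i$ vanishes while the $m$-th order summand does not. No termwise comparison is possible; the argument must be global. You abandon this in your second paragraph, but it should be discarded outright.

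Second, the key inequality is stated but not proved, and the proof is where all the content lies. The ingredients are: (i) the sequence $v_i=D_s^m h_i$ has \emph{weighted mean zero}, $\sum_i v_i\,\delta_i=0$ where $\delta_i$ is the denominator in the defining difference quotient, because the first differences of $D_s^{m-1}h$ telescope to zero around the cycle---you write ``mean-zero (or periodic),'' but periodicity alone is not enough (constant sequences are periodic and kill any Poincar\'e inequality), so the mean-zero property must be derived from the recursion, not assumed; (ii) for such a sequence, $\max_k|v_k|\le\frac12\sum_i|v_{i+1}-v_i|$, obtained by writing $v_k$ as a $\delta$-weighted average of the differences $v_k-v_j$ and bounding each by half the total variation, since one may telescope from $j$ to $k$ in either direction around the cycle; (iii) Cauchy--Schwarz with weights $\mu_i$ (note $\sum_i\mu_i=\sum_i|e_i|=\ell(c)$) to convert the $\ell^1$ bound in (ii) into the weighted $\ell^2$ quantity appearing in $\dot{\gsi}^{m+1}$, contributing one factor of $\ell(c)$, together with the crude bound $\sum_i|v_i|^2\mu_{i,m}\le\ell(c)\max_k|v_k|^2$ contributing the other; the two cancel against the ratio $\ell(c)^{2(m+1)-3}/\ell(c)^{2m-3}=\ell(c)^2$ of length weights. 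Of your two proposed mechanisms for the constant, only the $L^\infty$/total-variation route delivers the clean $\tfrac14$; the spectral gap of the discrete periodic Laplacian would give an $n$-dependent constant and is a dead end here. The parity bookkeeping you flag is real but benign, precisely because both weight systems $\{|e_i|\}$ and $\{\mu_i\}$ sum to $\ell(c)$, so the odd and even cases run in parallel.
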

\begin{proof}
The proof of this result is rather technical and we postpone it to the appendix. 
\end{proof}
With these lemmas in hand, we arrive at a result concerning the main properties of the discrete Riemannian metric $\gsi^m$, which parallels the statements of Lemma \ref{smoothproperties}.\\

\begin{lemma}\label{discreteinvariances}
  The metric $\gsi^m_c(h,k)$ has the following properties:
  \begin{enumerate}
  \item\label{item1} For all $n\ge 3$, $m\ge0$, $\gsi^m$ is invariant with respect to the action of $\frac{\mathbb{Z}}{n\mathbb{Z}}$ on $\Rdnst$, as well as rescaling, rotation, and translation. If $j\in\frac{\mathbb{Z}}{n\mathbb{Z}}$, $\lambda\in\R^+$, $R\in \on{SO}(\R^d)$ and $\mathbf{v}\in\R^d$, then
    \begin{align*} 
      \gsi_c^m(h,k)&=\gsi_{c\circ j}^m(h\circ j,k\circ j)=\gsi_{\lambda c}^m(\lambda h,\lambda k)=\gsi_{\mathbf{v}+c}^m(h,k)=\gsi_{Rc}^m(Rh,Rk).
    \end{align*}
  \item\label{item2} The metric $\gsi^m(h,k)$ is equivalent to any metric of the form
    $$\widetilde{\gsi}^m(h,k):=\sum_{j=0}^m \sum_{i=1}^na_j\ell(c)^{2j-3}\langle D_s^j h_i,D_s^j k_i\rangle\mu_{i,m}$$ for $a_0,a_m>0$ and $a_j\ge 0$ for $j=1,\ldots,m-1$ in the following sense: for some $C\in\R$, all $c\in \Rdnst$ and all $h\in T_c\Rdnst\cong\Rdn$
      $$\frac{1}{C}{\gsi}^m_c(h,h)\le \widetilde{\gsi}^m_c(h,h)\le C {\gsi}^m_c(h,h).$$
  \end{enumerate}
\end{lemma}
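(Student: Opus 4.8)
The plan is to prove the two items by separate mechanisms: item~\ref{item1} by transformation bookkeeping for each of the four symmetries, and item~\ref{item2} by a short sandwich argument resting entirely on Lemma~\ref{metricequivalence}. For item~\ref{item1} I would first record how the three ingredients of $\gsi^m$ — the length $\ell(c)$, the weights $\mu_{i,m}$, and the discrete derivatives $D_s^k h$ — transform under each symmetry, and then multiply the resulting factors summand by summand. Translation and cyclic reindexing are immediate: adding $\mathbf v$ leaves every edge $e_i=c_{i+1}-c_i$ (hence every $|e_i|$, $\mu_i$ and $\ell(c)$) unchanged and does not move the tangent vectors; and since the recursion defining $D_s^k$ is local and shift-equivariant, one has $D_s^k(h\circ j)_i=(D_s^k h)_{i+j}$, so a cyclic shift by $j$ merely reindexes the period-$n$ sum $\sum_{i=1}^n$. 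For $R\in\on{SO}(\R^d)$ one uses linearity and isometry, giving $D_s^k(Rh)=R\,D_s^k h$, $|Re_i|=|e_i|$, and $\langle R\,\cdot\,,R\,\cdot\,\rangle=\langle\,\cdot\,,\cdot\,\rangle$, so each summand is preserved.

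The only symmetry requiring genuine computation is rescaling, which I regard as the crux of item~\ref{item1}. By induction on $k$ I would show $D_s^k(\lambda h)=\lambda^{1-k}D_s^k h$: the base case $D_s^0(\lambda h)=\lambda h$ is clear, and each step of the recursion divides by a length ($\mu_i$ or $|e_i|$) that scales by $\lambda$, lowering the exponent by one. Combining this with $\ell(\lambda c)=\lambda\,\ell(c)$ and $\mu_{i,m}(\lambda c)=\lambda\,\mu_{i,m}(c)$, the order-$m$ summand acquires the factor $\lambda^{2m-3}\cdot\lambda^{2(1-m)}\cdot\lambda=\lambda^0$, and the order-$0$ summand is likewise unchanged. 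This exact cancellation is precisely the scale invariance that the exponent $2j-3$ was designed to produce, and it is where a miscount of those exponents would surface.

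For item~\ref{item2} I would first observe that, once each order-$j$ summand is paired with its natural (parity-dependent) vertex weight, one has $\gsi^m_c=\dot{\gsi}^0_c+\dot{\gsi}^m_c$ and $\widetilde{\gsi}^m_c=\sum_{j=0}^m a_j\,\dot{\gsi}^j_c$, so that the claim becomes a comparison of two non-negative combinations of the common pieces $\dot{\gsi}^0_c,\dots,\dot{\gsi}^m_c$. Iterating Lemma~\ref{metricequivalence} from $j$ up to $m$ yields $\dot{\gsi}^j_c(h,h)\le 4^{\,j-m}\dot{\gsi}^m_c(h,h)\le\dot{\gsi}^m_c(h,h)$ for every $1\le j\le m$. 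The upper bound $\gsi^m_c\le C\,\widetilde{\gsi}^m_c$ then follows from $a_0,a_m>0$ by discarding the non-negative intermediate terms of $\widetilde{\gsi}^m$, and the lower bound $\tfrac1C\widetilde{\gsi}^m_c\le\gsi^m_c$ follows by replacing each intermediate $a_j\dot{\gsi}^j_c$ by $a_j\dot{\gsi}^m_c$ and collecting, with $C$ depending only on $m$ and the $a_j$.

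The one point I would check carefully — and what I expect to be the main obstacle — is this weight matching between $\gsi^m$ and $\widetilde{\gsi}^m$. For even $m$ the order-$0$ term of $\gsi^m$ carries the weight $|e_i|$, whereas a literal reading of $\widetilde{\gsi}^m$ carries $\mu_i$; since $\sum_i|h_i|^2\mu_i$ and $\sum_i|h_i|^2|e_i|$ are not uniformly comparable on their own (take $h$ supported at a single vertex with one incident edge far longer than the other), one cannot simply match the pieces termwise. I would resolve this either by fixing the weight convention so that each order-$j$ piece of $\widetilde{\gsi}^m$ uses the same parity-dependent weight $\mu_{i,j}$ as in $\gsi^m$ — making the two metrics literally share the pieces $\dot{\gsi}^j_c$ — or, if the uniform weight $\mu_{i,m}$ is insisted upon, by absorbing the discrepancy into the higher-order term: a long incident edge forces $\ell(c)$ to be large, and the length factor $\ell(c)^{2m-3}$ with $m\ge 2$ then makes the $\dot{\gsi}^m_c$ contribution dominate the excess in the zeroth-order weight. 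Granting Lemma~\ref{metricequivalence} and settling this weight convention, the remainder of item~\ref{item2} is a two-line estimate.
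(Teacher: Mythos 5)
Your proposal is correct and follows essentially the same route as the paper: for item~\ref{item2} the paper likewise iterates Lemma~\ref{metricequivalence} to collect everything into a sandwich between $a_0\dot{\gsi}^0_c+a_m\dot{\gsi}^m_c$ and $\hat{a}_0\dot{\gsi}^0_c+\hat{a}_m\dot{\gsi}^m_c$ and then compares coefficients, while for item~\ref{item1} it writes out only the cyclic-shift case and leaves the translation, rotation, and scaling computations (which you supply, with the correct exponent count $\lambda^{2m-3}\lambda^{2(1-m)}\lambda=\lambda^0$) to the reader. Your flag about the weight is well taken: the paper's proof silently treats the order-$j$ piece of $\widetilde{\gsi}^m$ as $a_j\dot{\gsi}^j_c$, i.e.\ reads the weight as the parity-matched $\mu_{i,j}$ rather than the literal $\mu_{i,m}$, which is exactly the convention your argument adopts.
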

\begin{proof}
  The proof of statement \ref{item1} is similar to the smooth case and we will not present the proof for translation, rotation, or scale invariance. The key difference is the action by $\frac{\mathbb{Z}}{n\mathbb{Z}}$. Note $\gsi_c^m(h,k)$ and $\gsi_{c\circ j}^m(h\circ j,k\circ j)$ sum over the exact same terms. Therefore, since $|e_i\circ j|=|e_{i+j}|$ and $\mu_i\circ j=\mu_{i+j}$, we see
  $$\dot{\gsi}_{c\circ j}^m(h\circ j,k\circ j)=\sum_{i=1+j}^{n+j}\ell(c)^{2m-3}\langle D_s^m h_i,D_s^mk_i\rangle\mu_{i,m}=\sum_{i=1}^n\ell(c)^{2m-3}\langle D_s^m h_i,D_s^mk_i\rangle\mu_{i,m}=\dot{\gsi}_{c}^m(h,k)$$
  where the center equality is due to the fact that $i$ belongs to $\frac{\mathbb{Z}}{n\mathbb{Z}}$ so that the $i=1$ term of the sum is identical to the $i=n+1$ term.
  
  Statement \ref{item2} largely follows via Lemma \ref{metricequivalence}. Given our $\widetilde{\gsi}^m$, it allows us to collect all our coefficients into $\dot\gsi^0$ and $\dot\gsi^m$ terms alone to form a $\widehat{\gsi}^m$ with
  $$a_0\dot{\gsi}^0_c(h,h)+a_m\dot{\gsi}^m_c(h,h)\le \widetilde{\gsi}^m_c(h,h)\le\widehat{\gsi}^m_c(h,h).$$
  As $\widehat{\gsi}^m_c(h,h)=\hat{a}_0\dot{\gsi}^0(h,h)+\hat{a}_m\dot{\gsi}^m(h,h)$ for some $a_0\le \hat{a}_0$ and $a_m\le \hat{a}_m$. Note also that
  $$\min(\hat{a}_0,\hat{a}_m)\gsi^m_c(h,h)\le \widehat{\gsi}^m_c(h,h)\le \max(\hat{a}_0,\hat{a}_m)\gsi^m_c(h,h)$$
  and similar for $a_0\dot{\gsi}^0_c(h,h)+a_m\dot{\gsi}^m_c(h,h)$. Thus we may write
  $$\frac{1}{C}\gsi^m_c(h,h)\le\min(a_0,a_m)\gsi^m_c(h,h)\le \widetilde{\gsi}^m_c(h,h)\le \max(\hat{a}_0,\hat{a}_m)\gsi^m_c(h,h)\le C\gsi^m_c(h,h)$$
  for
  $$C=\max\left(\frac{1}{a_0},\frac{1}{a_m}, \hat{a}_0,\hat{a}_m\right).$$
\end{proof}
We have seen that this discrete metric shares several properties with its infinite dimensional counterpart. In the following proposition we show that we are indeed justified in referring to it as a discretization of $\sgsi^m$:\\

\begin{proposition}\label{convergenceproposition}
  Let $n\ge 3$ and for $i=0,\ldots, n-1$ let $\theta_i=\frac{i}{n}$. Let $c\in\Imm(S^1,\R^d)$ and $h,k\in T_c\Imm(S^1,\R^d)$. Define $\tilde{c}_n=(c(\theta_1),c(\theta_2),\ldots,c(\theta_n))\in\Rdnst$. Similarly define $\tilde{h}_n$ and $\tilde{k}_n$ using $h,k$. Then
  $$\lim_{n\to\infty} \gsi^m_{\tilde{c}_n}(\tilde{h}_n,\tilde{k}_n)=\sgsi^m_c(h,k)$$
\end{proposition}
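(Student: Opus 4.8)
The plan is to reduce the statement to three ingredients and then combine them. Writing $\gsi^m=\dot\gsi^0+\dot\gsi^m$ and splitting $\sgsi^m$ analogously into its order-$0$ and order-$m$ parts, it suffices to prove for each homogeneous piece that $\dot\gsi^m_{\tilde c_n}(\tilde h_n,\tilde k_n)\to\int_{S^1}\ell(c)^{2m-3}\langle D_s^m h,D_s^m k\rangle\,ds$. The three ingredients are: (i) convergence of the discrete length $\ell(\tilde c_n)$ to the smooth length $\ell(c)$; (ii) uniform convergence, over $i$, of the discrete arc-length derivatives $D_s^j\tilde h_{n,i}$ and $D_s^j\tilde k_{n,i}$ to the sampled smooth derivatives $(D_s^jh)(\theta_i)$ and $(D_s^jk)(\theta_i)$; and (iii) the recognition that the weights $\mu_{i,m}$ turn the defining sum into a Riemann sum for $\int_{S^1}(\cdots)\,ds$. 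Throughout we use that $c$, $h$, $k$ are smooth on the compact set $S^1$, so all Taylor expansions below hold to arbitrary order with remainders uniform in $i$.

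Ingredients (i) and (iii) are the routine ones. Since $c$ is regular, a Taylor expansion gives $|e_i|=\tfrac1n|c'(\theta_i)|+O(1/n^2)$ and $\mu_i=\tfrac1n|c'(\theta_i)|+O(1/n^2)$, both uniformly in $i$. Summing the first identity over the $n$ vertices shows that the inscribed polygon length $\ell(\tilde c_n)=\sum_i|e_i|$ satisfies $\ell(\tilde c_n)=\tfrac1n\sum_i|c'(\theta_i)|+O(1/n)\to\int_{S^1}|c'(\theta)|\,d\theta=\ell(c)$, hence also $\ell(\tilde c_n)^{2m-3}\to\ell(c)^{2m-3}$. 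The same expansion identifies $\mu_{i,m}$, which equals either $\mu_i$ or $|e_i|$, with the sampled arc-length element $\tfrac1n|c'(\theta_i)|+O(1/n^2)$; thus for any continuous integrand one has $\sum_i f(\theta_i)\mu_{i,m}\to\int_{S^1}f\,ds$ by convergence of Riemann sums on the uniform partition, the $O(1/n^2)$ corrections contributing a total of $O(1/n)$.

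The crux, and the step I expect to be the main obstacle, is (ii). The naive induction on $\max_i|D_s^j\tilde h_{n,i}-(D_s^jh)(\theta_i)|$ fails: forming $D_s^{j+1}$ divides a first difference of the order-$j$ errors by a spacing of size $\Theta(1/n)$, which would inflate a uniform error $\delta_n$ into one of size $\sim n\delta_n$. The remedy is to carry a stronger inductive hypothesis, namely a uniform first-order asymptotic expansion $D_s^j\tilde h_{n,i}=(D_s^jh)(\theta_i)+\tfrac1n a_j(\theta_i)+O(1/n^2)$ with $a_j$ a fixed smooth function; the base case $j=0$ holds with $a_0\equiv 0$ since $\tilde h_{n,i}=h(\theta_i)$ exactly. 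Under this hypothesis the first difference of the error term is $O(1/n^2)$, so after division by $|e_i|=\tfrac1n|c'(\theta_i)|+O(1/n^2)$ it remains $O(1/n)$, while the leading part reproduces $D_s^{j+1}h(\theta_i)$ together with a new smooth coefficient $a_{j+1}$; the alternation between the $\mu_i$- and $|e_i|$-denominators and the one-vertex staggering of the grid (a shift of the evaluation point by $O(1/n)$, harmless by smoothness) enter only in this bookkeeping. Granting (ii), the sampled integrand $\ell(\tilde c_n)^{2m-3}\langle D_s^m\tilde h_{n,i},D_s^m\tilde k_{n,i}\rangle$ converges uniformly to $\ell(c)^{2m-3}\langle (D_s^mh)(\theta_i),(D_s^mk)(\theta_i)\rangle$, and combining this with the Riemann-sum convergence of (iii) and the length convergence of (i) yields the claimed limit for each homogeneous term; summing the order-$0$ and order-$m$ contributions gives $\sgsi^m_c(h,k)$.
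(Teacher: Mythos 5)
Your overall strategy---uniform convergence of the sampled discrete derivatives plus Riemann-sum convergence of the weights $\mu_{i,m}$---is essentially the same reduction the paper performs (there the sum is rewritten as $\int_{S^1}F_m\,d\theta$ with $F_m$ piecewise constant and the convergence of the iterated difference quotients is delegated to Lemma~\ref{derivativeapproximators}), and you are right that your ingredient (ii) is where the real work lies. However, the inductive step you propose for (ii) does not close. Your hypothesis at level $j$ reads $D_s^j\tilde h_{n,i}=(D_s^jh)(\theta_i)+\tfrac1n a_j(\theta_i)+r_{j,i}$ with $r_{j,i}=O(1/n^2)$ \emph{unstructured}. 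The structured parts behave as you say, but the first difference $r_{j,i+1}-r_{j,i}$ of two unrelated $O(1/n^2)$ quantities is in general only $O(1/n^2)$, so after division by $|e_i|=\Theta(1/n)$ it contributes an unstructured $O(1/n)$ term at level $j+1$. That term can neither be absorbed into $\tfrac1n a_{j+1}(\theta_i)$ (which must be $1/n$ times a \emph{fixed smooth function} evaluated at $\theta_i$) nor into the $O(1/n^2)$ slot, so the hypothesis is not re-established; and if you weaken the level-$(j+1)$ statement to ``$+O(1/n)$'', the next differentiation step produces an $O(1)$ error and all convergence is lost precisely for $m\ge 2$, the regime the paper cares about.

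The repair is to carry a full asymptotic expansion: assume inductively that $D_s^j\tilde h_{n,i}=\sum_{p=0}^{N}n^{-p}a_{j,p}(\theta_i)+O(n^{-(N+1)})$ uniformly in $i$, with every $a_{j,p}$ smooth and $a_{j,0}=D_s^jh$. Then each first difference is structured except that of the final remainder, exactly one order of the expansion is consumed per differentiation step, and choosing $N\ge m$ at the base case (where the expansion is exact, since $\tilde h_{n,i}=h(\theta_i)$ and $|e_i|,\mu_i$ admit Taylor expansions to all orders) delivers $D_s^m\tilde h_{n,i}=(D_s^mh)(\theta_i)+O(1/n)$ uniformly. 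With that strengthening your ingredients (i)--(iii) assemble into a complete and correct proof; it also makes explicit a point the paper's own argument treats rather briskly when it deduces $\|\mathcal D_{c,n,m}(f)-D_s^m f\|_{L^\infty}\to 0$ from Lemma~\ref{derivativeapproximators}, which as stated only controls a single application of $I_n^1$ to a smooth function.
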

\begin{proof}
We first define certain approximation operators, which we will need throughout the proof. Namely we consider
  $I^1_n(f)$, $\widetilde{I^1_n}(f)$, and $I_n^0(f)$ as piece-wise constant approximates of $f'$, $f'$, and $f$ respectively. Each is constant on each $\left[\theta_i,\theta_{i+1}\right)\subset [0,1]$ and all converge uniformly as $n\to \infty$. Both $I_n^1$ and $\widetilde{I_n^1}$ are needed due to the dependence on formulas on parity.
 We define them by prescribing their value on each interval, i.e., for $\theta$ in $[\theta_i,\theta_{i+1})$ we let
  $$I_n^0(f)(\theta)=f(\theta_i)\qquad I_n^1(f)(\theta)=n(f(\theta_{i+1})-f(\theta_i))\qquad \widetilde{I_n^1}(f)(\theta)=n(f(\theta_i)-f(\theta_{i-1})).$$
  The proof of their convergence is postponed to the appendix, cf.~Lemma \ref{derivativeapproximators}.
  
  Given $c\in\Imm(S^1,\R^d)$ we let $\widehat{\mu}_{c,n,m}=|I_n^1(c)|$ if $m$ is even and $\widehat{\mu}_{c,n,m}=\frac{|I_n^1(c)|+|\widetilde{I}_n^1(c)|}{2}$ if $m$ is odd. Now, for a given $c\in\Imm(S^1,\R^d)$ and $n\in\mathbb{Z}_{\ge 3}$ define $\mathcal{D}_{c,n,m}$ via the following recursive formula
  $$\mathcal{D}_{c,n,0}(f)=I^0_n(f)\qquad\mathcal{D}_{c,n,m}(f)=\begin{cases}\frac{\widetilde{I}^1_n(D_{c,n,m-1}(f))}{\widehat{\mu}_{c,n,m}}\quad m\in 2\mathbb{Z}^+\\\frac{I^1_n(D_{c,n,m-1}(f))}{\widehat{\mu}_{c,n,m}}\quad m\notin 2\mathbb{Z}^+\end{cases}.$$
  Note that all the $n$ cancel in these definitions once expanded as at each step we introduce a factor of $n$ in both the numerator and denominator.
  Next define $\ell_n(c)$ as $\sum_{i=1}^{n}|c(\theta_{i+1})-c(\theta_i)|$. With these in hand we define:
  $$F_m(h,k,c,n,\theta)=\ell_n(c)^{2m-3}\left\langle\mathcal{D}_{c,n,m}(h),\mathcal{D}_{c,n,m}(k)\right\rangle\widehat{\mu}_{c,n,m}.$$
  We prove the proposition in two steps
  \begin{claim}
    $\int_{S^1}F_m(h,k,c,n,\theta)d\theta=\dot{\gsi}^m_{\widetilde{c}_n}(\widetilde{h}_n,\widetilde{k}_n)$
  \end{claim}
  We expand to
  $$\int_{S^1}F_m(h,k,c,n,\theta)d\theta=\int_{S^1}\ell_n(c)^{2m-3}\left\langle\mathcal{D}_{c,n,m}(h),\mathcal{D}_{c,n,m}(k)\right\rangle\widehat{\mu}_{c,n,m}d\theta$$
  and note that the integrand on the right is constant on each interval $[\theta_i,\theta_{i+1})$ as $\ell_n$ is constant with respect to $\theta$ and each of $\mathcal{D}_{c,n,m}(h),\mathcal{D}_{c,n,m}(k)$, and ${\mu}_{c,n,m}$ are constant on such intervals. Thus for some $\widetilde{\mathcal{D}}_{c,n,m,i}(h),\widetilde{\mathcal{D}}_{c,n,m,i}(k)\in\R^d$ we can write either  $$\int_{S^1}F_m(h,k,c,n,\theta)d\theta=\sum_{i=1}^{n}\ell_n(c)^{2m-3}\frac{1}{n}\left\langle \widetilde{\mathcal{D}}_{c,n,m,i}(h),\widetilde{\mathcal{D}}_{c,n,m,i}(k)\right\rangle |n(c(\theta_{i+1})-c(\theta_i))|$$
  or
  $$=\sum_{i=1}^{n}\ell_n(c)^{2m-3}\frac{1}{n} \left\langle\widetilde{\mathcal{D}}_{c,n,m,i}(h),\widetilde{\mathcal{D}}_{c,n,m,i}(k)\right\rangle\frac{|n(c(\theta_{i+1})-c(\theta_i))|+|n(c(\theta_{i})-c(\theta_{i-1}))|}{2}$$
  depending on the parity of $m$. Note all $n$ cancel and we can rewrite further to either
  $$\sum_{i=1}^{n}\ell_n(c)^{2m-3}\left\langle \widetilde{\mathcal{D}}_{c,n,m,i}(h),\widetilde{\mathcal{D}}_{c,n,m,i}(k)\right\rangle|e_i|$$
  for odd $m$ or
  $$\sum_{i=1}^{n}\ell_n(c)^{2m-3}\left\langle \widetilde{\mathcal{D}}_{c,n,m,i}(h),\widetilde{\mathcal{D}}_{c,n,m,i}(k)\right\rangle\mu_i$$
  for even $m$. Thus the only thing that remains to be shown for this first claim is that these $\widetilde{\mathcal{D}}$ terms are our discrete $D_s$ derivatives of $\widetilde{h}_n$ and $\widetilde{k}_n$. This is clear for $m=0$ as $I^0_n(h)$ is simply the step function taking on the value $\widetilde{h}_{n,i}=h\left(\frac{i}{n}\right)$ on $[\theta_i,\theta_{i+1})$. For $m>0$ we see either
  $$\widetilde{\mathcal{D}}_{c,n,m,i}(h)=\frac{\widetilde{\mathcal{D}}_{c,n,m-1,i+1}(h)-\widetilde{\mathcal{D}}_{c,n,m-1,i}(h)}{|e_i|}$$
  or
  $$\widetilde{\mathcal{D}}_{c,n,m,i}(h)d\theta=\frac{\widetilde{\mathcal{D}}_{c,n,m-1,i}(h)-\widetilde{\mathcal{D}}_{c,n,m-1,i-1}(h)}{\mu_i}$$
  so that by induction it follows that these really do correspond to the discrete derivatives as $\widetilde{\mathcal{D}}_{c,n,m,i}(h)$ relates to $\widetilde{\mathcal{D}}_{c,n,m-1,j}(h)$ identically to how $D_s^mh_i$ relates to $D_s^{m-1} h_j$.\\
  The second step is easier by comparison. We have shown that $\int_{S^1}F_m(\dotsm)=\dot{\gsi}_{\tilde{c}_n}^m(\dotsm)$. We next need to show how the limit of $F_m$ is related to $\dot{\sgsi}^m_c(h,k)=\ell^{2m-3}\langle D_s^m h,D_s^m k\rangle|c'|$ which will allow us to relate $\gsi^m$ and $\sgsi^m$.
  \begin{claim}
    $\displaystyle{\lim_{n\to\infty} F_m(h,k,c,n,\theta)\overset{L^\infty}\to \dot{\sgsi}^m_c(h,k)}$   \end{claim}
  By Lemma \ref{derivativeapproximators} it follows that
  $$\lim_{n\to\infty}\left\|\mathcal{D}_{c,n,m}(f)-D_s^m(f)\right\|_{L^\infty}\to 0\quad\text{and}\quad\lim_{n\to\infty}\|\hat{\mu}_{c,n,m}-|c'|\|_{L^\infty}\to0.$$
  It is also clear that $\lim_{n\to\infty}\ell_n(c)\to \ell(c)$ as our curves are rectifiable.
  Now $$\lim_{n\to\infty}\|F_m(h,k,c,n,\theta)-\dot{\sgsi}^m_c(h,k)\|_{L^\infty}\to 0.$$
  This implies that $\displaystyle{\lim_{n\to\infty}F_m(\dotsm)}$ is equal to $\dot{\sgsi}^m_c(h,k)$ almost everywhere, and we may join our claims together via
  $$\int_{S^1}\dot{\sgsi}^m_c(h,k)d\theta=\int_{S^1} \lim_{n\to\infty} F_m(\dotsm) d\theta\overset{*}=\lim_{n\to\infty} \int_{S^1}F_m(\dotsm)d\theta=\lim_{n\to\infty}\dot{\gsi}^m_{\tilde{c}_n}(\tilde{h}_n,\tilde{k}_n),$$
  where $*$ follows by the Lebesgue dominated convergence theorem as we can bound the integrand almost everywhere by the constant function on $S^1$ equal to $\|\dot{\sgsi}^m_c(h,k)\|_{L^{\infty}}$. By linearity it follows
  $$\sgsi^m_c(h,k)=\lim_{n\to\infty}\gsi^m_{\tilde{c}_n}(\tilde{h}_n,\tilde{k}_n)$$
  which completes the proof.
\end{proof}

\section{Completeness Results}
In this section we will prove the main theoretical result of the present article, namely we will show that the finite dimensional manifolds $(\Rdnst,\gsi^m)$ indeed capture the completeness properties of the space of smooth, regular curves equipped with the class of reparametrization invariant Sobolev metrics $\sgsi^m$, i.e., we will prove the  following theorem:\\

\begin{thm}[Completeness of the $\gsi^m$-metric]\label{mainbit}
  Let $n\ge 3$ and $d\ge 2$. Then the space $(\Rdnst,\gsi^m)$ is metrically and geodesically complete if and only if $m\ge 2$. Furthermore,  for any two points in $\Rdnst$ there exists a minimizing geodesic, i.e.,  $(\Rdnst,\gsi^m)$  is geodesically convex.
\end{thm}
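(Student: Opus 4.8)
The plan is to reduce everything to \textbf{metric completeness} and then invoke the Hopf--Rinow theorem: since $(\Rdnst,\gsi^m)$ is a finite dimensional Riemannian manifold, metric completeness automatically yields geodesic completeness and geodesic convexity. Thus the whole burden falls on showing that every $d_{\gsi^m}$-Cauchy sequence converges to a point of $\Rdnst$. The two ways a sequence could fail to converge inside $\Rdnst$ are (a) a vertex collision, i.e.\ some edge length $|e_i|\to 0$, and (b) an escape of the vertices to infinity; I would rule out both by Lipschitz-type control of suitable geometric quantities along paths.

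The technical heart is the estimate
\[
  |D_s^1 h_i|\le 2^{-(m-2)}\sqrt{\gsi^m_c(h,h)}\qquad\text{for all }i,
\]
valid for $m\ge 2$, which I would prove by a discrete Poincar\'e argument. First, telescoping gives $D_s^1h_j-D_s^1h_k=\sum_l \mu_l\, D_s^2 h_l$, and a weighted Cauchy--Schwarz inequality together with $\sum_l\mu_l=\ell(c)$ bounds this difference by $\sqrt{\dot{\gsi}^2_c(h,h)}$. Second, the telescoping identity $\sum_k D_s^1 h_k\,|e_k|=\sum_k(h_{k+1}-h_k)=0$ shows that $D_s^1 h$ has $|e_k|$-weighted mean zero, so averaging the difference bound against the weights $|e_k|$ yields $|D_s^1 h_i|\le\sqrt{\dot{\gsi}^2_c(h,h)}$ for each $i$; finally Lemma~\ref{metricequivalence}, applied $m-2$ times, gives $\dot{\gsi}^2_c\le 4^{-(m-2)}\dot{\gsi}^m_c\le 4^{-(m-2)}\gsi^m_c$. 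This is exactly the place where the hypothesis $m\ge2$ is essential: for $m=1$ there is no second-order term to control the fluctuation of $D_s^1 h$, mirroring the failure of completeness below the critical order in the smooth setting.

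With this in hand the rest is bookkeeping. Since $\tfrac{d}{dt}\log|e_i(\gamma(t))|=\langle e_i/|e_i|,\,D_s^1\dot\gamma_i\rangle$, the estimate shows $|\tfrac{d}{dt}\log|e_i||\le 2^{-(m-2)}\sqrt{\gsi^m}$, so integrating along paths and taking the infimum proves that each map $c\mapsto\log|e_i(c)|$ is Lipschitz with respect to $d_{\gsi^m}$. Consequently, along a Cauchy sequence every edge length stays in a fixed interval $[\epsilon_0,\epsilon_1]$ with $\epsilon_0>0$, which in particular keeps the total length $\ell$ bounded and rules out scenario (a). To rule out (b) I would use the zeroth order term: on the region where $|e_i|\ge\epsilon_0$ and $\ell\le\ell_1$ one has $\ell^{-3}|\dot\gamma_i|^2|e_i|\le\dot{\gsi}^0\le\gsi^m$, hence $|\tfrac{d}{dt} c_i|\le C\sqrt{\gsi^m}$ with $C$ uniform on the region. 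Because translation invariance forbids controlling absolute position, the point here is that we only need to control the \emph{motion} of the vertices: integrating gives $|c_i(c)-c_i(c')|\le C\,d_{\gsi^m}(c,c')$ for curves joined by paths that stay in the region.

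Finally I would assemble these into a convergence argument. Given a Cauchy sequence, the Lipschitz bound on $\log|e_i|$ confines it, and near-minimizing paths between its tail elements, to a fixed region $\{|e_i|\in[\epsilon_0/2,2\epsilon_1]\}$, on which the vertex-motion estimate holds uniformly; hence each vertex sequence $(c_i^{(j)})_j$ is Cauchy in $\R^d$ and converges to a limit $c^*$ with $|e_i^*|\ge\epsilon_0>0$, so $c^*\in\Rdnst$. On a compact Euclidean neighborhood of $c^*$ contained in $\Rdnst$ the metric $\gsi^m$ is smooth and nondegenerate, so $d_{\gsi^m}$ is locally equivalent to the Euclidean distance there, and coordinate convergence upgrades to $d_{\gsi^m}$-convergence; this establishes metric completeness, and Hopf--Rinow finishes the proof. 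I expect the main obstacle to be the Poincar\'e estimate and, more subtly, verifying that near-minimizing paths do not leave the good region — the interplay between the scale invariance of the metric (which prevents bounding $\ell$ or absolute positions directly) and the Lipschitz control of $\log|e_i|$ (which nonetheless pins down all edge lengths along Cauchy sequences) is what makes the argument work.
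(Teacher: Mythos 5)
Your proof is correct, and while it shares the overall skeleton of the paper's argument (reduce everything to metric completeness via Hopf--Rinow, reduce to the order-two part of the metric via Lemma~\ref{metricequivalence}, and control escape to infinity with the zeroth-order term exactly as in the paper's Case~1), your treatment of the degenerate-edge scenarios is genuinely different. The paper argues by contradiction along a finite-length path $\gamma:[0,1)\to\Rdnst$ (Lemma~\ref{pathstatement}) and splits the failure modes into three cases: a vertex escaping, some-but-not-all edges collapsing (handled with $\dot{\gsi}^2$ at a distinguished vertex where a collapsing edge meets a non-collapsing one), and the total length tending to $0$ or $\infty$ (handled with $\dot{\gsi}^1$). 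Your discrete Poincar\'e estimate $|D_s^1h_i|\le\sqrt{\dot{\gsi}^2_c(h,h)}$ --- which is in substance Lemma~\ref{clm:intermediatetwo} for $m=1$ combined with Cauchy--Schwarz, so the paper already contains its proof in the appendix --- yields global Lipschitz control of every $\log|e_i|$ with respect to $d_{\gsi^m}$ simultaneously. This subsumes the paper's Cases~2 and~3 in one uniform statement, removes the need to locate the boundary vertex between collapsing and non-collapsing edges, and avoids the step in the paper's Case~2 where one must check that $\int_0^1\bigl|D_te_{k-1}\bigr|/|e_{k-1}|\,dt$ is finite. It is, in effect, the ``Lipschitz function bounded on metric balls'' strategy that the paper itself adopts in Appendix~A for the constant-coefficient metric, transplanted to the scale-invariant case. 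The one point you rightly flag as delicate --- confining near-minimizing paths between tail elements of the Cauchy sequence to the good region --- is handled correctly, because your bound on $\frac{d}{dt}\log|e_i(\gamma(t))|$ is a pathwise integral estimate and therefore controls the edge lengths along the entire path, not merely at its endpoints; combined with the $\dot{\gsi}^0$ bound on vertex displacement this closes the argument.
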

This theorem can readily be seen as a discrete analog to the above Theorem~\ref{smoothcomplete} from the smooth case. As $\Rdnst$ is finite dimensional, Hopf-Rinow implies we need only show metric completeness and that both geodesic completeness and geodesic convexity follow automatically. The requirement that $d\ge 2$ is due to the fact that $\R^{1\times n}_*$ is not connected and thus Hopf-Rinow does not apply. In the following Lemma we observe that metric incompleteness reduces to showing that there exists finite length paths that leave the space:
\begin{lemma}\label{pathstatement}
  Let $(\mathcal M,g)$ be a possibly infinite dimensional manifold. $(\mathcal M,g)$ is metrically incomplete if and only if there exists a path $\gamma:[0,1)\to \mathcal M$, such that 
  \begin{enumerate}
  \item $\gamma(t)\in \mathcal M$ for $0\leq t<1$;
  \item $\displaystyle{\lim_{t\to 1}}\, \gamma(t)$ does not exist in $(\mathcal M,g)$;
  \item the length of $\gamma$ (w.r.t. $g$) is finite, i.e., $\L_g(\gamma)<\infty$.
  \end{enumerate}
\end{lemma}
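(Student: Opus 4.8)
The plan is to prove both implications by reducing metric completeness to a Cauchy criterion for the family $\{\gamma(t)\}$ as $t\to1$, the essential tool being the tail-length function $\Lambda(s):=L_g(\gamma|_{[s,1)})$.

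For the implication assuming such a $\gamma$ exists, I would first note that since $L_g(\gamma)<\infty$ the $g$-speed of $\gamma$ is integrable on $[0,1)$, so $\Lambda(s)\to 0$ as $s\to1$ by absolute continuity of the integral. Because any admissible restriction of $\gamma$ connects $\gamma(s)$ to $\gamma(t)$, we have $d_g(\gamma(s),\gamma(t))\le\Lambda(\min(s,t))$ for all $s,t<1$, so $\{\gamma(t)\}$ satisfies the Cauchy criterion as $t\to1$. If $(\mathcal M,d_g)$ were complete, this would force $\lim_{t\to1}\gamma(t)$ to exist: taking any sequence $t_k\to1$, it is $d_g$-Cauchy, so it has a limit $p\in\mathcal M$, and then $d_g(\gamma(t),p)\le\Lambda(\min(t,t_k))+d_g(\gamma(t_k),p)\to0$ shows $\gamma(t)\to p$. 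This contradicts condition (2), so $(\mathcal M,g)$ must be metrically incomplete.

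For the converse I would assume incompleteness and fix a $d_g$-Cauchy sequence $(p_k)$ with no limit in $\mathcal M$; passing to a subsequence I may assume $d_g(p_k,p_{k+1})<2^{-k}$. By the definition of $d_g$ as an infimum over piecewise $C^\infty$ paths, I can choose for each $k$ a piecewise $C^\infty$ path $\sigma_k$ from $p_k$ to $p_{k+1}$ with $L_g(\sigma_k)<2^{-k+1}$. Concatenating these, with $\sigma_k$ reparametrized to run over $[1-2^{-(k-1)},\,1-2^{-k}]$, produces a path $\gamma:[0,1)\to\mathcal M$. On each compact subinterval of $[0,1)$ it is a finite concatenation of piecewise $C^\infty$ pieces with matching endpoints, hence admissible; its total length is $\sum_k L_g(\sigma_k)<\sum_k 2^{-k+1}<\infty$, giving (1) and (3). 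For (2), if $\lim_{t\to1}\gamma(t)=p$ existed then $\gamma(1-2^{-k})=p_{k+1}\to p$, contradicting that $(p_k)$ has no limit.

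The estimate $d_g(\gamma(s),\gamma(t))\le\Lambda(\min(s,t))$ and the geometric-series length bound are routine. The step requiring the most care is the converse construction: I must ensure the concatenated curve is genuinely admissible for $L_g$ (piecewise $C^\infty$ with compatible endpoints, despite the junctions accumulating at $t=1$), and I must invoke the defining infimum property of $d_g$ to extract the near-optimal connecting paths $\sigma_k$. A secondary subtlety in the infinite dimensional setting is that ``$\lim_{t\to1}\gamma(t)$ exists in $(\mathcal M,g)$'' in condition (2) should be read as convergence in $d_g$, which is precisely the sense in which completeness is being asserted.
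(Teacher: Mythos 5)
Your proof is correct and follows essentially the same route as the paper: the converse direction (incompleteness implies such a path) is the identical construction of a summable-gap subsequence joined by near-optimal connecting paths, and the forward direction differs only cosmetically in that you control oscillation via the tail-length function $\Lambda(s)$ where the paper reparametrizes to constant speed and uses $d_g(\gamma(t_i),\gamma(t_j))\le L_g(\gamma)\,|t_i-t_j|$. Both yield the same Cauchy-criterion argument, so no further comparison is needed.
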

\begin{proof}
  We first assume incompleteness. As $\mathcal{M}$ is metrically incomplete there exists some Cauchy sequence $x_n$ that fails to converge in the space. Without loss of generality we can assume $\sum_n^\infty d_g(x_n,x_{n+1})<\infty$, since we can always find a Cauchy sub-sequence of our Cauchy sequence with this property. To construct such a sub-sequence we perform the following procedure: if, under our Cauchy conditions, for all $n,m\ge N(i)$ we have $d_g(x_n,x_m)<\frac{1}{10^i}$ then we define our sub-sequence as $\{x_{N(i)}\}_{i\in\mathbb{N}}\subset \{x_n\}$. The length $\sum_{i=1}^\infty d_g( x_{N(i)},x_{N(i+1)})$ of this sub-sequence is bounded above by $\frac{1}{9}$ by construction as $\sum_{i=1}^\infty d_g( x_{N(i)},x_{N(i+1)})<\sum_{i=1}^\infty \frac{1}{10^i}=\frac{1}{9}$.  
  While we cannot guarantee paths lying in the space between the elements of our sequence that realize their distances, we can have a collection of paths $\gamma_n$ going from $x_n$ to $x_{n+1}$ with a length $<d_g(x_n,x_{n+1})+\frac{1}{2^n}$ lying within the space so that the overall distance of $\gamma=\gamma_1*\gamma_2*\gamma_3*\dotsm$ will have a total length $L(\gamma)< 1+\sum_{n=1}^\infty d_g(x_n,x_{n+1})$. Here $*$ denotes path concatenation. $\gamma$ is then of finite length with $\lim_{t\to 1}\gamma(t)\notin \mathcal{M}$ and $\gamma(t)\in \mathcal{M}$ for $0\le t< 1$.  
 On the other hand, assuming such a path, we wish to show we have a Cauchy sequence that does not converge with respect to our metric. Without loss of generality, assume $\gamma$ is traversed with constant speed. Since $L_g(\gamma)<\infty$ the length of any subsegment of our curve is also finite.
  If $a$ and $b$ are points on our curve, then let $d(a,b)$ be the length of the subsegment of gamma with endpoints $\gamma(a),\gamma(b)$. Since $\gamma$ is traversed with constant speed, we see $d(t_i,t_j)= \L_g(\gamma)|t_i-t_j|$. The distance between $\gamma(t_i)$ and $\gamma(t_j)$ along $\gamma$ is not necessarily equal to the distance between these points in the space, but if $d_g$ is the metric distance, then $d_g(\gamma(t_i),\gamma(t_j))\le d(t_i,t_j)= \L_g(\gamma)|t_i-t_j|$. Set $t_n=1-\frac{1}{n}$ noting that this sequence $(t_n)$ is a Cauchy sequence in the reals. The distances between a pair of these points is $d_g(\gamma(t_i),\gamma(t_j))\le \L_g(\gamma)|t_i-t_j|$. Now given any $\varepsilon>0$ we have some $N$ so that for all $i,j>N$ we have $|t_i-t_j|<\frac{\varepsilon}{\L_g(\gamma)}$ so that $d_g(\gamma(t_i),\gamma(t_j))<\varepsilon$. This makes $\gamma(t_n)$  a Cauchy sequence in $\mathcal M$, but $(t_n)$ converges to 1 so this Cauchy sequence does not converge in our space:
  $$\lim_{n\to \infty}\gamma(t_n)=\lim_{t\to 1}\gamma(t)\notin\mathcal{M}.$$
  Thus our space is not metrically complete.
\end{proof}

Next we recall a useful  lemma, which will allow us to prove boundedness of certain quantities on metric balls and will be crucial for the desired completeness results. We state the lemma here in a much more narrow and simple context than in its original form:
\begin{lemma}[Lemma 3.2 in \cite{bruveris2015completeness}]\label{lem:usefulreference}
  Let $(\mathcal{M},g)$ be a Riemannian manifold with a weakly continuous metric and $f:\mathcal{M}\to \R$ a $C^1$-function. Assume that for each metric ball $B(y,r)$ in $\mathcal{M}$ there exists a constant $C$, such that
  $$\left|\frac{df}{dh}\right|\le C(1+|f(x)|)\|h\|_g$$
  holds for all $x\in B(y,r)$ and all $h\in T_x\mathcal{M}$. Then the function
  $$f:(M,d)\to (\R,|\cdot|)$$
  is continuous and Lipschitz continuous on every metric ball. In particular, $f$ is bounded on every metric ball.
\end{lemma}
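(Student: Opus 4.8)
The plan is to run a Grönwall-type estimate along near-minimizing paths, converting the pointwise differential bound on $f$ into an integrated bound on $|f(x_1)-f(x_0)|$. First I would fix a metric ball $B(y,r)$ and pass to the enlarged ball $B(y,R)$ with $R:=3r+1$: since any $x_0,x_1\in B(y,r)$ satisfy $d(x_0,x_1)<2r$, there exist piecewise-$C^1$ paths $\gamma\colon[0,1]\to\mathcal M$ from $x_0$ to $x_1$ with $L_g(\gamma)<2r+1$, and every such path stays inside $B(y,R)$ because $d(y,\gamma(t))\le d(y,x_0)+L_g(\gamma|_{[0,t]})<r+(2r+1)=R$. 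I would then let $C$ be the single constant the hypothesis provides for $B(y,R)$, so that it applies at every point of every competitor path.

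Along a fixed such $\gamma$, I would set $u(t):=1+|f(\gamma(t))|$ and $L(t):=\int_0^t\|\gamma'(s)\|_g\,ds$. Because $f$ is $C^1$, $\frac{d}{dt}f(\gamma(t))=\frac{df}{d\gamma'(t)}$, and the hypothesis gives $\big|\frac{d}{dt}f(\gamma(t))\big|\le C\,u(t)\,L'(t)$, whence $u'(t)\le C\,u(t)\,L'(t)$ almost everywhere. Grönwall's inequality then yields
$$1+|f(\gamma(t))|=u(t)\le u(0)\,e^{CL(t)}=(1+|f(x_0)|)\,e^{CL(t)}.$$
Specializing $x_0=y$ and letting the endpoint range over $B(y,r)$ bounds the right-hand side by $(1+|f(y)|)\,e^{C(2r+1)}$, which establishes that $f$ is bounded on $B(y,r)$; call this bound $K_0$.

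For Lipschitz continuity I would integrate the differential inequality directly,
$$|f(x_1)-f(x_0)|\le\int_0^1\Big|\tfrac{d}{dt}f(\gamma(t))\Big|\,dt\le C\int_0^1 u(t)\,L'(t)\,dt,$$
and then bound $u(t)\le(1+K_0)\,e^{C(2r+1)}=:K$ uniformly using the boundedness just obtained. The integral collapses to $C\,K\,L_g(\gamma)$, and taking the infimum over the near-minimizing competitor paths replaces $L_g(\gamma)$ by $d(x_0,x_1)$, giving $|f(x_1)-f(x_0)|\le C\,K\,d(x_0,x_1)$, i.e. Lipschitz continuity on $B(y,r)$ with constant $CK$.

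I expect the main obstacle to be the uniformity bookkeeping rather than any single estimate: one must guarantee that every path entering the infimum lies in one fixed ball on which the hypothesis yields a single constant $C$ and a single bound $K$, which is exactly why I enlarge to $B(y,R)$ before optimizing. A secondary technical point is that the weakly continuous metric hypothesis is what makes $t\mapsto\|\gamma'(t)\|_g$ measurable and $L_g$ well defined, legitimizing the arclength bookkeeping and the almost-everywhere application of Grönwall; I would note this but not dwell on it.
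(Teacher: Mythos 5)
The paper does not prove this lemma at all---it is imported verbatim by citation from Bruveris's completeness paper---so there is no in-paper proof to compare against. Your argument is correct and is essentially the standard proof from that reference: enlarge the ball so that all near-minimizing competitor paths stay where a single constant $C$ applies, run Gr\"onwall on $u(t)=1+|f(\gamma(t))|$ against arclength to get boundedness, and then integrate the differential bound once more and pass to the infimum over paths to get the Lipschitz estimate. Your care with the uniformity bookkeeping (the enlarged ball $B(y,R)$ and the second application of Gr\"onwall to control $u$ along paths that may leave $B(y,r)$) is exactly the point that needs attention, and you handle it correctly.
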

We will use this result to prove boundedness of certain quantities in the following lemma. Note, that this result is the exact discrete analog of Lemma 4.4 of~\cite{bauer2024completeness}.
\begin{lemma}\label{BoundedOnMetricBalls} Let $m\ge2$ and let $B_{\gsi^m}(\hat{c}_0,r)\subset\Rdnst$ be a $g^m$-metric ball centered at $\hat{c}_0\in \Rdnst$. Then both total curve length $\ell(c)$, and edge length $|e_i|$ are bounded from above and from below away from zero on $B_{\gsi^m}(\hat{c}_0,r)$, i.e., there exist constants $C_1,C_2>0$ (depending only on $r,m$ and $\hat{c}_0$) such that for all $c\in B_{\gsi^m}(\hat{c}_0,r)$ and all $i\in\{1,\ldots,n-1\}$  we have
\begin{equation}
C_1 < \ell(c)<C_2,\text{ and } C_1 < |e_i|<C_2.
\end{equation}
\end{lemma}
\begin{proof}
    Using Lemma~\ref{lem:usefulreference} we will show that for each $i\in\{0,\ldots,n-1\}$ the function $\ln(|e_i|)$  is Lipschitz on any $g^m$-metric ball $B_{g^m}(\hat{c}_0,r)$ . This in turn implies that $|e_i|$ and $|e_i|^{-1}$ are each bounded above away from infinity on $B_{g^m}(\hat{c}_0,r)$ and, by extension, bounded below away from 0 on the same set. As $\displaystyle{\ell(c)=\sum_{i=0}^{n-1}|e_i|}$ this will also imply $\ell(c)$ is  bounded above away from infinity and below away from zero on $B_{g^m}(\hat{c}_0,r)$.\\
  We estimate:
    \begin{align*} &\left|\frac{d}{dh}\ln(|e_i|)\right|=\left| |e_i|^{-1}\frac{\langle h_{i+1}-h_i,e_i\rangle}{|e_i|}\right|\le \left|\frac{|h_{i+1}-h_i|}{|e_i|}\right|\overset{(*_1)}\le\frac{1}{2}\sum_{i=1}^n\left|\frac{h_{i+1}-h_i}{|e_i|}-\frac{h_i-h_{i-1}}{|e_{i-1}|}\right|\\ &\qquad=\frac{1}{2}\sum_{i=1}^n\left|\frac{h_{i+1}-h_i}{|e_i|}-\frac{h_i-h_{i-1}}{|e_{i-1}|}\right|\frac{\sqrt{\mu_i}}{\sqrt{\mu_i}}\le \frac{1}{2}\left(\sum_{i=1}^n\left|\frac{h_{i+1}-h_i}{|e_i|}-\frac{h_i-h_{i-1}}{|e_{i-1}|}\right|^2\frac{1}{\mu_i}\right)^\frac{1}{2}\left(\sum_{i=1}^n \mu_i\right)^\frac{1}{2}\\
      &\qquad=\frac{1}{2}\left(\sum_{i=1}^n\ell(c)\left|\frac{h_{i+1}-h_i}{|e_i|}-\frac{h_i-h_{i-1}}{|e_{i-1}|}\right|^2\frac{1}{\mu_i}\right)^\frac{1}{2}= \frac{1}{2}\sqrt{\dot{\gsi}^2(h,h)}\overset{(*_2)}\le \frac{1}{2^{m-1}}\sqrt{\dot{\gsi}^m(h,h)}\le \frac{1}{2^{m-1}}\|h\|_{g^m}.
    \end{align*}
    where $(*_1)$ follows from Lemma \ref{clm:intermediatetwo} found in Appendix C and $(*_2)$ follows via Lemma~\ref{metricequivalence}. This proves Lemma~\ref{BoundedOnMetricBalls}.
  \end{proof}
We are now ready to prove Theorem~\ref{mainbit}.
\begin{proof}[Proof of Theorem~\ref{mainbit}]
  We will first show the incompleteness of $(\Rdnst,\gsi^m)$ for $m=1$; the case $m=0$ follows from the same calculation.
  Therefore, we consider the path $\gamma:[0,1)\to \Rdnst$ via
$$\gamma_{i}\left(t\right)=\left(i,0,\ldots,0\right)\ 0\le i\le n-2\qquad \gamma_{n-1}\left(t\right)=\left(0,1-t,\ldots\right)$$
Note $\lim_{t\to 1}\gamma(t)$ is not an element of $\Rdnst$ so that $(\Rdnst, \gsi^1)$ will be incomplete if $L_{\gsi^1}[\gamma]<\infty$. This is since if $\gamma$ is of finite length then $\gamma(1-\frac{1}{i})$ is a Cacuhy sequence in $\gsi^1$ such that $\lim_{i\to\infty}\gamma(1-\frac{1}{i})=\lim_{t\to1}\gamma(t)\notin\Rdnst$.

We find
\begin{align*}  L_{g^1}[\gamma]&=\int_0^1\sqrt{g^1(D_t\gamma,D_t\gamma)}dt=\int_0^1\sqrt{\sum_{i=0}^{n-1}\left( \frac{|D_t\gamma_i(t)|^2\mu_i}{\ell(\gamma(t))^3}+\frac{|D_t e_i|^2}{\ell(\gamma(t))|e_i|}\right)}dt\\
                               &=\int_0^1\sqrt{\left( \frac{\mu_{n-1}}{\ell(\gamma(t))^3}+\frac{1}{\ell(\gamma(t))}\left(\frac{1}{|e_0|}+\frac{1}{|e_{n-1}|}\right)\right)}dt\\
                               &=\int_{0}^{1}\left(\frac{\sqrt{(n-2)^2+(1-t)^{2}}+1-t}{2\left(\sqrt{(n-2)^2+(1-t)^{2}}+n-t-1\right)^{3}}\right.\\
                               &\qquad\left.+\frac{1}{\left(\sqrt{(n-2)^2+(1-t)^{2}}+n-t-1\right)}\left(\frac{1}{1-t}+\frac{1}{\sqrt{(n-2)^2+(1-t)^{2}}}\right)\right)^\frac{1}{2}dt\\
  &\le \int_0^1\sqrt{\frac{n-2+2(1-t)}{1}+\frac{1}{1}(\frac{1}{1}+\frac{1}{1})}dt=\int_0^1\sqrt{n-t+2}dt<\sqrt{n+2}<\infty.
\end{align*}

Next we prove the metric completeness for $m\ge 2$. 
  Therefore we first show  that for $m\geq 2$ the $g^m$-geodesic distance dominates the Euclidean distance on $g^m$-metric balls, i.e., for each metric ball $B_{g^m}(\hat{c}_0,r)$ there exists some constant $C$ dependent only on $\hat{c}_0$ and $r$ such that
    \begin{equation}\label{eq:metricdom}
        \on{dist}_{\Rdn}(\hat{c}_1,\hat{c}_2)\le C \on{dist}_{g^m}(\hat{c}_1,\hat{c}_2)
    \end{equation}
for all $\hat{c}_1,\hat{c}_2\in B_{g^m}(\hat{c}_0,r).$ We estimate using only the $\dot{\gsi}^0$-part to obtain:
  \begin{align*}
    \L_{\gsi^m}(\pathofcurves(t))&\ge\int_0^1\sqrt{\sum_{i=1}^n\frac{1}{\ell(\pathofcurves(t))^3}|D_t\pathofcurves(t)_i|^2\mu_i}dt\ge\int_0^1\frac{1}{\ell(\pathofcurves(t))^{3/2}}|D_t \pathofcurves(t)_j|\sqrt{\mu_j}dt\\
    &\ge M_j\int_0^1|D_t \pathofcurves(t)_j|dt=M_j\L_{\R^d}(\pathofcurves(t)_j)\quad \text{ for each } j
  \end{align*}
  Note by Lemma~\ref{BoundedOnMetricBalls}, $M_j=\inf_{t} \sqrt{\frac{\mu_j}{\ell(\gamma(t))^3}}>0$ for any path in our metric ball since $m\ge 2$. Now by averaging we see
    $$\L_{\gsi^m}(\pathofcurves(t))\ge \sum_{j=1}^n\frac{M_j}{n}\L_{\R^d}(\gamma(t)_j) \ge C\L_{\Rdn}(\pathofcurves(t))$$
    for some constant $C$. By taking infimums over paths in $B_{g^m}(\hat{c}_0,r)$ between $\hat{c}_1$ and $\hat{c}_2$ we have thus shown~\eqref{eq:metricdom}.\\
    Finally, we may show $(\Rdnst,\gsi^m)$ is complete for $m\ge 2$. 
  Therefore we consider a $\gsi^m$-Cauchy sequence $(\hat{c}_i)$ in $\Rdnst$. This must lie entirely within some $\gsi^m$ metric ball $B_{\gsi^m}(\hat{c}_0,r)$. Now ~\eqref{eq:metricdom} implies this is also a Cauchy sequence under the $\Rdn$ distance function. Using that $(\Rdn,\|\cdot\|_{\Rdn})$ is complete there exists $\hat{c}_\infty\in \Rdn$ such that  $$\on{dist}_{\Rdn}(\hat{c}_i,\hat{c}_\infty)=\|\hat{c}_i-\hat{c}_\infty\|_{\Rdn}
    \to 0.\qquad (*)$$
    In particular, as Lemma~\ref{BoundedOnMetricBalls} tells us edge lengths are bounded below away from zero on $B_{\gsi^m}(\hat{c}_0,\hat{c}_\infty)$ for any $m\ge 2$ it follows they are bounded below away from zero on the elements of the sequence $\hat{c}_i$ if $m\ge 2$. This means $\hat{c}_\infty$ must have nonzero edge lengths and thus $\hat{c}_\infty\in\Rdnst$. Now
    $\on{dist}_{\gsi^m}(\hat{c}_i,\hat{c}_\infty)\to 0$ so that $(\hat{c}_i)\to \hat{c}_\infty$ in $(\Rdnst,\gsi^m)$. Thus $\gsi^m$-Cauchy sequences in $(\Rdnst,\gsi^m)$ must have limits in $(\Rdnst,\gsi^m)$, i.e., $(\Rdnst,\gsi^m)$ is metrically complete.
\end{proof}

\section{Geodesics, curvature and a comparison to Kendall's shape space}\label{sec:numerics}
\begin{wrapfigure}[12]{r}{0.32\textwidth}
\vspace{-1.1cm}
  \begin{center}
    \includegraphics[width=.9\linewidth]{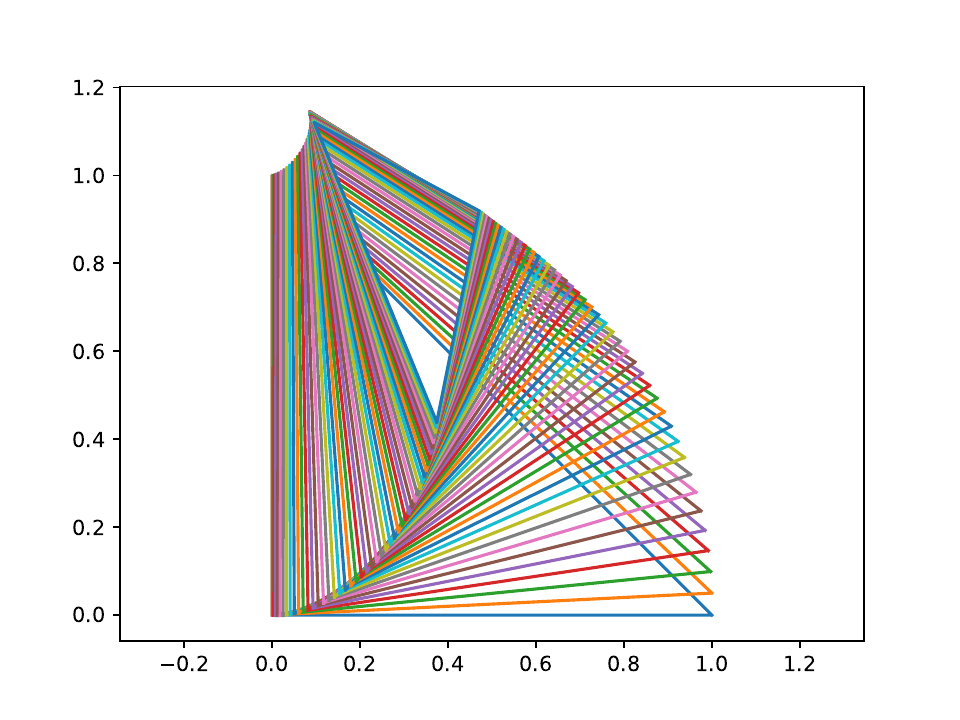}
    \end{center}
  \caption{Riemannian exponential map w.r.t. to the metric $\gsi^2$.}
  \label{fig:sc11}
\end{wrapfigure}
In this section we will further demonstrate the behavior of these geometries by  presenting selected examples of geodesics for different choices of metrics. Finally, we will consider the special case of planar triangles, where we will compare the discrete Sobolev metrics defined in this paper with Kendall's shape metric; it is well known that under Kendall's metric the space of planar triangles reduces to a round sphere~\cite{kendall1984shape}.

All numerical examples were obtained using the programming language \texttt{Python}, where we implemented both the Riemannian exponential map (and the Riemannian log map) by solving the the geodesic initial value problem (and boundary value problem, resp.).

\paragraph{The geodesic initial value problem:}To approximate the Riemannian exponential map we calculate the Christoffel symbols using the automatic differentiation capabilities of \texttt{pytorch}. This in turn allows us to approximate the exponential map using a simple one-step Euler method. In Figure \ref{fig:sc11} one can see an approximated geodesic computed with initial conditions consisting of a right triangle $(0,0),(0,1),(1,0)$ and an initial velocity of $(0,1)$ on the third vertex and zero elsewhere. We note that an initial velocity which is non-zero only on a single vertex immediately puts multiple vertices in motion. 

\paragraph{The geodesic boundary value problem:}To approximate the Riemannian log map we employ a path-straigthening algorithm; i.e.,  we minimize the Riemannian energy over paths of curves starting at $c_0$ and ending at $c_1$, approximated using a fixed (finite) number of intermediate curves. Thereby, we reduce the approximation of the Riemannian log map to a finite dimensional, unconstrained minimization problem, which we tackle  using the \texttt{scipy} implementation of L-BFGS-B, where we use a simple linear interpolation as initialization. In particular, for higher order metrics the algorithm can fail if the initialization leaves the space of PL immersions, as the metric is undefined for curves that are not in $\Rdnst$. To fix this issue it is possible to add a small amount of  noise to the vertices of the initialization. We present two different examples of solutions to the geodesic boundary value problem in Figure~\ref{fig:BVPexamples}.
\begin{figure}[h]
  \begin{center}
  \includegraphics[trim={2in 0 1.6in 0},clip,width=\linewidth]{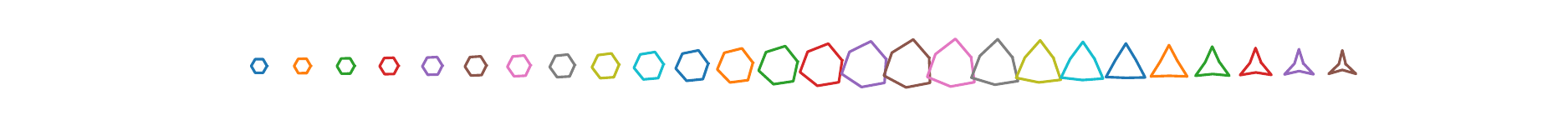}
  \includegraphics[trim={2in 0 1.6in 0},clip,width=\linewidth]{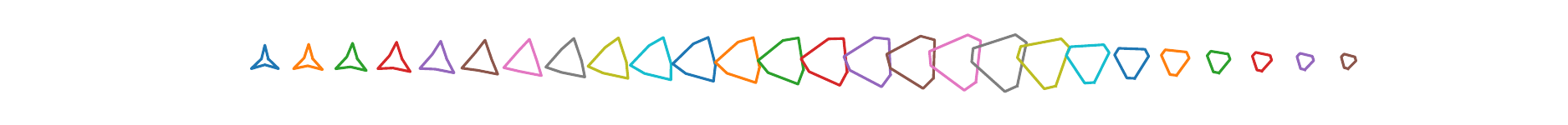}
  \end{center}
  \caption{Examples of solutions to the geodesic boundary value problem w.r.t. to the metric $\gsi^2$. First line:  from a hexagon to a spiked figure. Second line:  from the spiked figure to a blunt one.}
  \label{fig:BVPexamples}
\end{figure}

\paragraph{Gaussian Curvature of the Space of Triangles and Kendall's shape metric:}
A prudent comparison of the discrete metrics described in this work is with the Kendall metrics of discrete shapes, which may be defined on the same space. Comparing the geodesics with respect to these metrics also  highlights the nature of our completeness result. 

We start by restricting ourselves to the space of triangles $\mathbb{R}^{2\times 3}_*$ modulo the actions of rotation, translation, and scale. This space can be identified with the surface of a sphere with three punctures (the punctures correspond to degenerate triangles in which two vertices coincide) and when equipped with the Kendall metric \cite{kendall1984shape}, this space is famously isometric to the punctured sphere and has constant Gaussian curvature. In Figure \ref{fig:geocurves2}, we display boundary value geodesics with respect to the Kendall metric as well as for the discrete Sobolev metrics for $m=0,1,2$. While the geodesic with respect to the Kendall metric passes directly through a discrete curve where adjacent points coincide, each of the geodesics with respect to the metrics proposed in this paper does not pass through this point in the space. Moreover, geodesics with respect to the higher-order metrics pass further from this point.
\begin{figure}[h]
  \centering
\includegraphics[width=.14\textwidth]{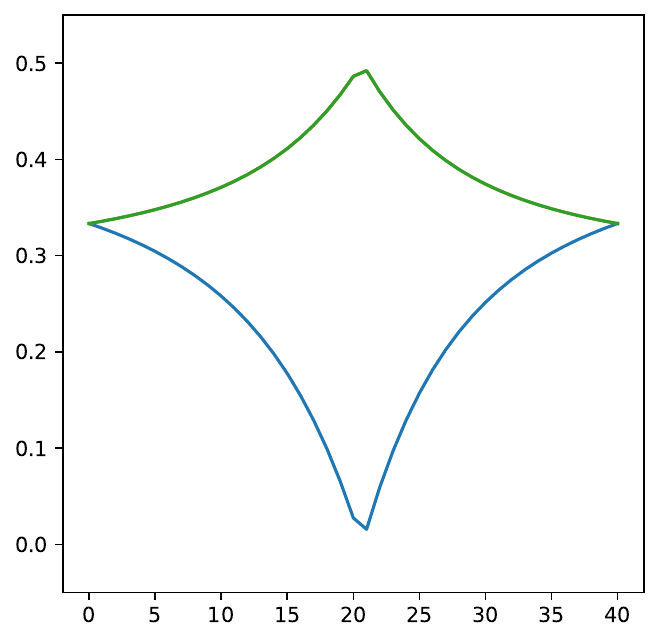}
  \includegraphics[width=.8\textwidth]{Geod_AL0.pdf}
\includegraphics[width=.14\textwidth]{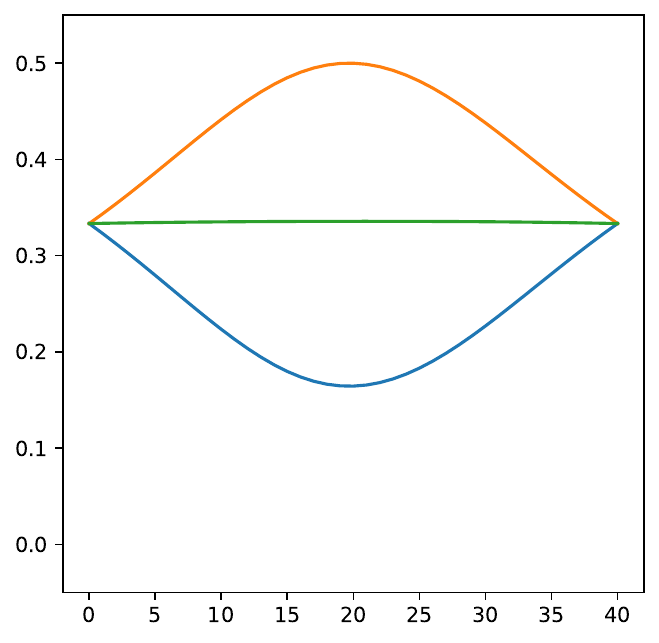}
  \includegraphics[width=.8\textwidth]{Geod_AL1.pdf}
\includegraphics[width=.14\textwidth]{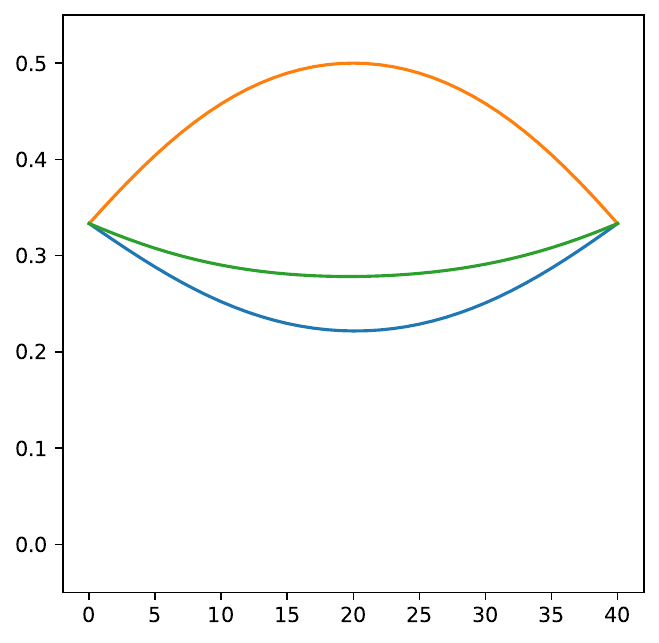}
  \includegraphics[width=.8\textwidth]{Geod_AL2.pdf}
  \includegraphics[width=.14\textwidth]{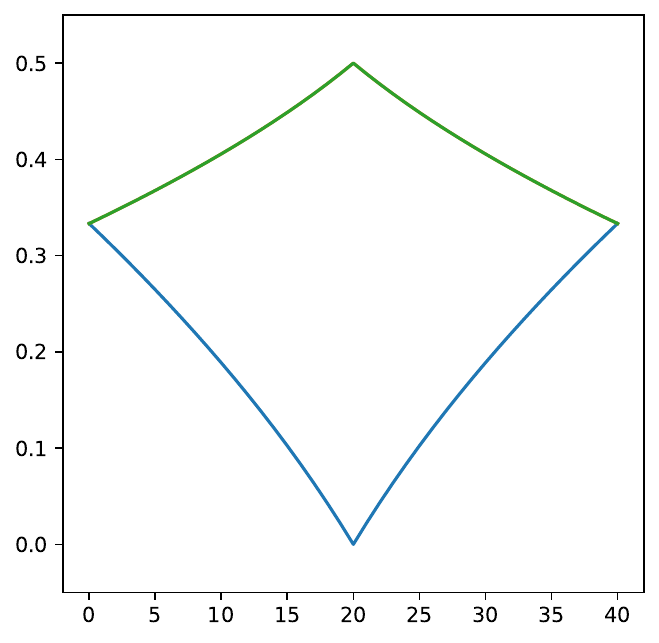}
\includegraphics[width=.8\textwidth]{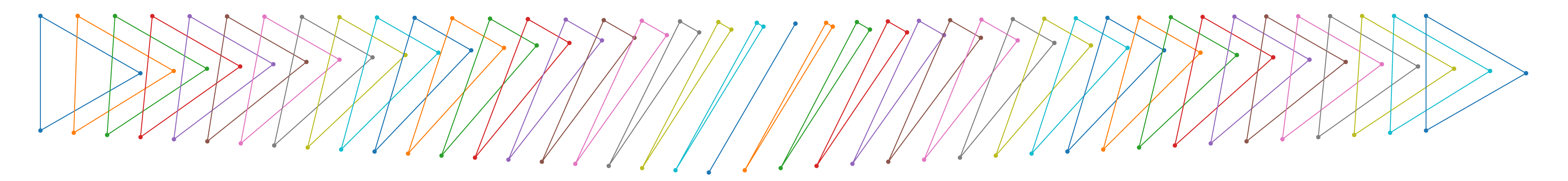}
  \caption{Geodesics in the space of triangles w.r.t. to four different Riemannian metrics. We display geodesics with respect to $\gsi^0$ (first line), $\gsi^1$ (second line), $\gsi^2$ (third line), and the Kendall metric (last line). To the left of each is a plot of the three edge lengths of the triangles along each geodesic.}
  \label{fig:geocurves2}
\end{figure}

Finally, we calculated numerical approximations of Gaussian curvature for the space of triangles w.r.t to the same four Riemannian metrics: while the Gaussian curvature for the Kendall metric is constant, this is clearly not the case for discrete Sobolev metrics of the present article, cf. Figure \ref{fig:spheres}.
Indeed all of the $\gsi^m$ metrics exhibit positive curvature near the points corresponding to triangles with double-points (which do not correspond to elements of $\Rdnst$), but as we approach the singularities (c.f. Figures 5 and 6), for $m=0$ the curvature decreases towards 0, for $m=1$ the curvature approaches a positive finite value, and for $m=2$ the curvature approaches positive infinity. More generally, an increase in order leads for a more curved space both for negatively curved regions but also for positively curved regions. Finally, in Figure \ref{fig:geocurves1}, we further visualize the curvature along selected paths to further demonstrate the behavior at key points of interest. One can see again the increase in positive curvature near the punctures of the sphere. 
\begin{figure}[H]
  \centering
  \includegraphics[width=.7\textwidth]{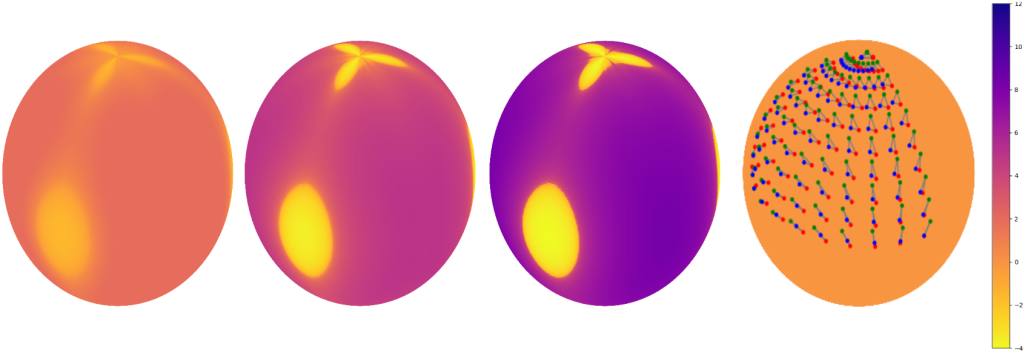}
  \caption{The Gaussian curvatures of the space of triangles triangles under the Kendall metric and the discretized scale invariant Sobolev metric $\gsi^m$ for $m=0,1,2$). From left to right we have $\gsi^m$ for $m=0,1,$ and $2$ and the Kendall metric (with the triangles drawn in). The scale is a symmetric log scale given by $\on{sign}(x)\log(|x+\on{sign}(x)|)$.}
  \label{fig:spheres}
\end{figure}

\begin{figure}[h]
  \centering
  \includegraphics[width=.9\textwidth]{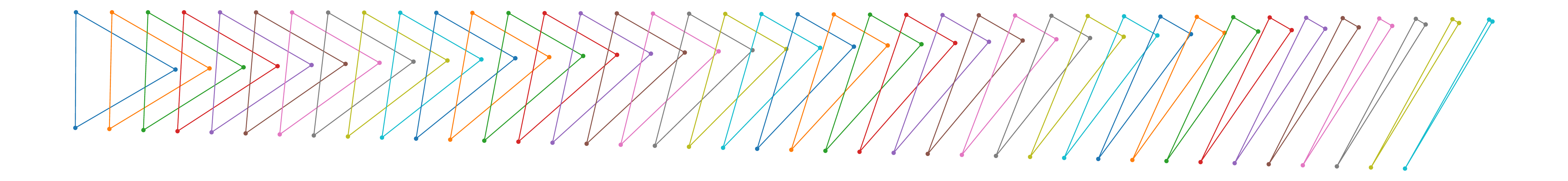}
  \includegraphics[width=.9\textwidth]{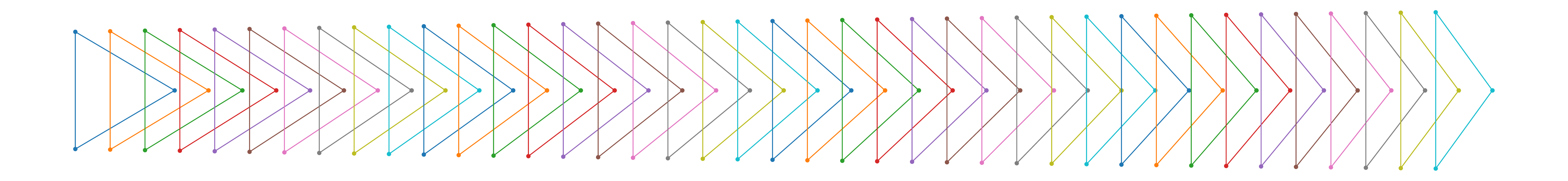}  \includegraphics[width=.45\textwidth]{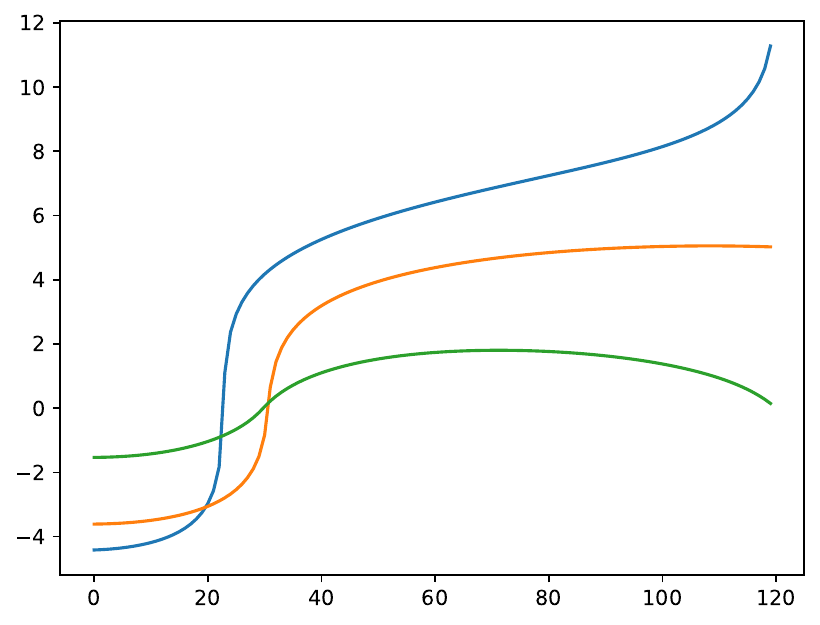}  \includegraphics[width=.45\textwidth]{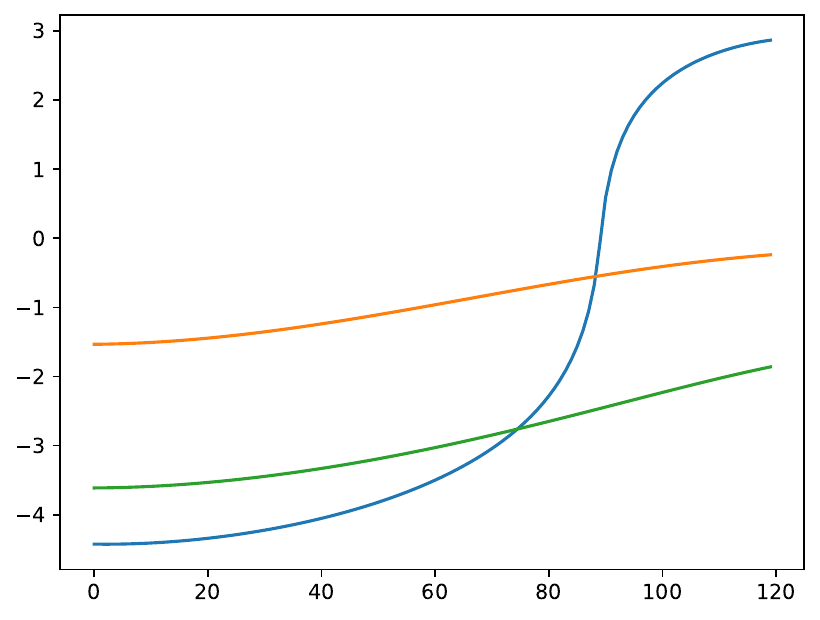}
  \caption{These images show the $g^m$ Gaussian curvatures at each point along the above Kendall geodesics in the space of triangles. The left panel corresponds to the top path and the right panel corresponds with the lower path. Note we omit the Kendall curvature as that would be a constant function $\equiv 1$ on either graph. The curvatures displayed with a symmetric log scale with green corresponding to $m=0$, orange corresponding to $m=1$, and blue corresponding to $m=2$. }
  \label{fig:geocurves1}
\end{figure}

\appendix
\section{Constant Coefficient Sobolev Metrics}\label{sec:appendix:constant}
Our use of the $\ell(c)$ terms in $\sgsi^m$ metrics allowed for scale invariance. These terms may be removed to arrive at the constant coefficient Sobolev metrics
\begin{equation*}\label{constcoefsmoothmetricdef}
  \sgcc_c^m(h,k):=\int_{S^1}\langle h,k\rangle+\langle D_s^m h,D_s^m k\rangle ds.
\end{equation*}
 These lack scale invariance, but the manifolds $(\Imm(S^1,\R^d),\sgcc^m)$ are otherwise similar to the scale invariant formulation with identical completeness properties \cite{bauer2015metrics, bruveris2015completeness, bruveris2014geodesic}. We can likewise drop the $\ell(c)$ terms from our discretized metrics to arrive at constant coefficient discrete metrics $\gcc^m$. These also converge to the corresponding smooth $\sgcc^m$ like the scale invariant formulation, using exactly the same proof as used for Proposition \ref{convergenceproposition}, but without the $\ell(c)$ terms in $F$. Similarly, the parallel of the completeness statement, Theorem \ref{mainbit} still holds, though one extra estimate given below is needed for the proof. The item most complicated by the removal of the length terms is the relation between $\dot{\gcc}^m_c(h,h)$ and $\dot{\gcc}^{m+1}_c(h,h)$. As $\dot{\gsi}^m_c=\dot{\gcc}^m_c\cdot\ell(c)^{-3+2m}$ the statement of Lemma \ref{metricequivalence} here becomes
$$\dot{\gcc}^m_c(h,h)\le \frac{\ell(c)^2}{4}\dot{\gcc}^{m+1}_c(h,h)$$
which is a parallel to the equivalent statement for the smooth constant coefficient case as seen in the proof of Lemma 2.13 of \cite{bruveris2014geodesic}. 
The estimate given in the proof of Lemma~\ref{BoundedOnMetricBalls} using Lemma \ref{lem:usefulreference} will still work if we may show the following for $m\ge 2$:
  \begin{claim}
  $$C=\sup_{c\in B_{\widetilde{g}^m}(\hat{c}_0,r)}\ell(c)^{\left(\frac{3}{2}-m\right)}<\infty.$$
  on each $\widetilde{g}^m$-metric ball $B_{\widetilde{g}^m}(\hat{c}_0,r)$.
\end{claim}
The rest of the proof survives with no other changes.
\begin{proof}
  We can show $C<\infty$ by application of Lemma \ref{lem:usefulreference} and showing $\ell(c)^{\left(\frac{3}{2}-m\right)}$ is Lipschitz on $\gcc^m$ metric ball. We estimate to apply Lemma~\ref{lem:usefulreference} to $\ell(c)^{\left(\frac{3}{2}-m\right)}$: $$\left|\frac{d}{dh}\ell(c)^{\left(\frac{3}{2}-m\right)}\right|\le\frac{1}{2}\ell(c)^{\left(\frac{1}{2}-m\right)}\left(\sum_{i=1}^n\frac{\|h_{i+1}-h_i\|^2}{|e_i|}\right)^{\frac{1}{2}}\left(\sum_{i=1}^n|e_i|\right)^{\frac{1}{2}}$$ $$=\frac{\ell(c)^{\left(1-m\right)}}{2}\left(\sum_{i=1}^n\frac{\|h_{i+1}-h_i\|^2}{|e_i|}\right)^{\frac{1}{2}}\le\frac{\ell(c)^{\left(1-m\right)}}{2}\frac{\ell(c)^{\left(m-1\right)}}{2^{m-1}}\sqrt{\dot{\gcc}^m(h,h)}\le \frac{1}{2^{m}}\|h\|_{\gcc^{m}}.$$
\end{proof}

\section{Approximating derivatives in $L^{\infty}$}
\begin{lemma}\label{derivativeapproximators}
  Let $f$ be a function in $C^\infty(S^1)$. Let $\theta_i=\frac{i}{n}$ and define 
  $$I_n^0:C^\infty(S^1)\to L^\infty(S^1)$$
  so that $I_n^0(f)=g_0$ where $g_0$ is the piecewise constant function where $g_0(\theta)=f(\theta_i)$ for $\theta\in[\theta_i,\theta_{i+1})$. Further define $I_n^1(f)=g_1$ where $g_1(\theta)=n(f(\theta_{i+1})-f(\theta_i))$ for $\theta\in[\theta_{i},\theta_{i+1})$. Then
  $$\lim_{n\to\infty}\|I_n^0(f)-f\|_{L^\infty}=0\qquad\text{and}\qquad\lim_{n\to\infty}\|I_n^1(f)-f'\|_{L^\infty}=0.$$
\end{lemma}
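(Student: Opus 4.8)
The plan is to prove both uniform limits directly, leaning on the compactness of $S^1$, which ensures that $f$ and all of its derivatives are bounded and uniformly continuous. The key structural observation throughout is that every error bound I produce will depend only on the mesh size $1/n$ and on global moduli of continuity, never on the particular subinterval index $i$; this uniformity across subintervals is precisely what upgrades pointwise estimates to $L^\infty$ convergence.

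For the first statement I would fix $\theta\in[\theta_i,\theta_{i+1})$ for some $i$ and observe that $I_n^0(f)(\theta)-f(\theta)=f(\theta_i)-f(\theta)$. Since $f'$ is bounded on the compact circle $S^1$, the mean value theorem gives $|f(\theta_i)-f(\theta)|\le\|f'\|_{L^\infty}\,|\theta-\theta_i|\le\|f'\|_{L^\infty}/n$, a bound independent of both $i$ and $\theta$. Taking the supremum over $\theta\in S^1$ then yields $\|I_n^0(f)-f\|_{L^\infty}\le\|f'\|_{L^\infty}/n\to 0$.

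For the second statement I would rewrite the scaled forward difference as an average via the fundamental theorem of calculus: for $\theta\in[\theta_i,\theta_{i+1})$,
$$I_n^1(f)(\theta)=n\bigl(f(\theta_{i+1})-f(\theta_i)\bigr)=n\int_{\theta_i}^{\theta_{i+1}}f'(s)\,ds,$$
which, since the interval $[\theta_i,\theta_{i+1}]$ has length $1/n$, is exactly the mean value of $f'$ over that interval. Subtracting $f'(\theta)$ and using that $\theta$ itself lies in the same interval gives $|I_n^1(f)(\theta)-f'(\theta)|\le\sup_{|s-\theta|\le 1/n}|f'(s)-f'(\theta)|$. Because $f'\in\smooth{\infty}(S^1)$ is continuous on the compact set $S^1$, it is uniformly continuous, so its modulus of continuity at scale $1/n$ tends to $0$. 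As this bound is again uniform in $i$ and $\theta$, I conclude $\|I_n^1(f)-f'\|_{L^\infty}\to 0$.

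There is no genuine obstacle here: the whole argument is a uniform-continuity estimate, and the only point demanding any care is obtaining the bounds simultaneously over all $n$ subintervals, which the compactness of $S^1$ resolves automatically. One could alternatively phrase both estimates through a first-order Taylor expansion with remainder, but the mean-value and averaging formulation makes the uniformity most transparent and avoids tracking higher-order derivative bounds.
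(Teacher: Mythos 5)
Your proof is correct and follows essentially the same route as the paper's: both arguments reduce each limit to a local estimate on the subintervals $[\theta_i,\theta_{i+1})$ whose bound (a Lipschitz constant for the first limit, a modulus of continuity of $f'$ for the second) is independent of $i$, so the supremum over $S^1$ tends to zero. If anything, your treatment of the second limit via the integral mean $n\int_{\theta_i}^{\theta_{i+1}}f'(s)\,ds$ is cleaner than the paper's, which only bounds the difference quotient from above by $\sup_{[\theta_i,\theta_{i+1})}f'$ and leaves the matching lower bound implicit.
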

\begin{proof}
  Note that $S^1$ is compact so that our smooth, necessarily bounded function $f$ is Lipschitz continuous. Now if $K$ is our Lipschitz constant then $0\le \sup_{\theta\in[\theta_i,\theta_{i+1})}|g_0(\theta)-f(\theta)|\le \frac{K}{n}$ but this does not depend on $i$. 
  Thus
  $$\lim_{n\to\infty}\|I_n^0(f)-f\|\le \lim_{n\to\infty}\frac{K}{n}\to 0.$$
  For the second operator we make use of the mean value theorem
  \begin{align*}
      \lim_{n\to\infty}\|I_n^1(f)-f'\|_{L^\infty}&=\lim_{n\to\infty}\max_{i=1,\ldots,n}\|n(f(\theta_{i+1})-f(\theta_i))-f'\|_{L^\infty}\\
      &=\lim_{n\to\infty}\max_{i=1,\ldots,n}\left\|n\int_{\theta_i}^{\theta_{i+1}}f'(\theta)d\theta-f'\right\|_{L^\infty}\\
      &=\lim_{n\to\infty}\max_{i=1,\ldots,n}\left\|n\frac{f'(c_i)}{n}-f'\right\|_{L^\infty}\le\lim_{n\to\infty}\frac{K}{n}=0
  \end{align*}
\end{proof}
Define $\widetilde{I}_n^1(f)$ analogously to $I_n^1(f)$, but with $g_1(\theta)=n\cdot (f(\theta_{i})-f(\theta_{i-1}))$ for $\theta\in[\theta_{i},\theta_{i+1})$ and note that this is also such that $\lim_{n\to\infty}\|\widetilde{I}_n^1(f)-f'\|_{L^\infty}=0$.\\

\section{Proof of Lemma~\ref{metricequivalence}}
Before we are able to present the proof of Lemma~\ref{metricequivalence} we will need an additional technical Lemma which is a  discrete Poincar\'e type inequality, but adapted to the discrete arclength derivatives used in this work:
\begin{lemma}\label{clm:intermediatetwo}
  Let $m\ge 1$, let $c\in \Rdnst$ and $h\in T_c\Rdnst\cong \Rdn$, we then have
  \begin{equation}
    \max_{k\in\mathbb{Z}/n\mathbb{Z}}\left|\frac{D_s^{m-1}(h_{k+1})-D_s^{m-1}(h_k)}{|e_k|}\right|\leq \frac{1}{2}\sum_{i=1}^{n}\left|\frac{D_s^{m-1}(h_{i+1})-D_s^{m-1}(h_i)}{|e_i|}-\frac{D_s^{m-1}(h_i)-D_s^{m-1}(h_{i-1})}{|e_{i-1}|}\right|
  \end{equation}
 for odd $m$ and
  \begin{equation}
    \max_{k\in\mathbb{Z}/n\mathbb{Z}}\left|\frac{D_s^{m-1}(h_k)-D_s^{m-1}(h_{k-1})}{\mu_k}\right|\leq \frac{1}{2}\sum_{i=1}^{n}\left|\frac{D_s^{m-1}(h_{i+1})-D_s^{m-1}(h_i)}{\mu_{i+1}}-\frac{D_s^{m-1}(h_i)-D_s^{m-1}(h_{i-1})}{\mu_{i}}\right|
  \end{equation}
  for even $m$.
\end{lemma}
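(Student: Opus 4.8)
The plan is to reduce both parity cases to a single statement about a cyclic sequence of vectors whose weighted mean vanishes. The first observation is that the expressions appearing in the lemma are nothing but the next discrete derivative. Writing $v_i := D_s^m h_i$, the recursive definition of $D_s$ gives, for odd $m$ (so $m-1$ is even),
$$v_k = \frac{D_s^{m-1}(h_{k+1}) - D_s^{m-1}(h_k)}{|e_k|},$$
so that the term inside the maximum is exactly $v_k$, while each summand on the right equals $|v_i - v_{i-1}|$. For even $m$ (so $m-1$ is odd) the same recursion gives $v_k = \frac{D_s^{m-1}(h_k)-D_s^{m-1}(h_{k-1})}{\mu_k}$, and the right-hand summands equal $|v_{i+1}-v_i|$, which after a cyclic reindexing is again $\sum_i|v_i-v_{i-1}|$. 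Thus both inequalities collapse to the single claim
$$\max_{k} |v_k| \le \tfrac12 \sum_{i=1}^n |v_i - v_{i-1}|.$$

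Next I would isolate the structural constraint that makes this inequality true, since it is false for an arbitrary cyclic sequence (e.g.\ a nonzero constant one). Setting $w_i = |e_i|$ for odd $m$ and $w_i = \mu_i$ for even $m$, the defining recursion shows that $v_i w_i$ is a consecutive difference of $D_s^{m-1}h$ around the cycle; summing over $i\in\mathbb{Z}/n\mathbb{Z}$ telescopes to zero, giving the weighted balance condition $\sum_{i=1}^n v_i w_i = 0$ in $\R^d$. Since the weights are strictly positive, this says the $w$-weighted mean of the $v_i$ is the zero vector.

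The heart of the argument is then the following elementary fact: if $v_1,\dots,v_n\in\R^d$ (indexed cyclically) satisfy $\sum_i v_i w_i = 0$ with positive weights $w_i$, then $\max_k|v_k|\le \tfrac12\sum_i|v_i-v_{i-1}|$. To prove it I fix $k$ (assuming $v_k\ne0$) and project onto the unit vector $u=v_k/|v_k|$, reducing to the scalar sequence $a_i=\langle v_i,u\rangle$ with $a_k=|v_k|$ and $\sum_i a_i w_i=0$. The balance condition forces some index $l$ with $a_l\le 0$, since otherwise the weighted mean would be strictly positive. Cutting the cycle at $k$ and $l$ splits its $n$ edges into two arcs; along each arc the triangle inequality gives total variation $\sum_{\mathrm{arc}}|a_i-a_{i-1}|\ge |a_k-a_l|\ge a_k$, and the two arcs together account for the full cyclic sum without double counting, so $\sum_i|a_i-a_{i-1}|\ge 2a_k=2|v_k|$. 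Finally $|a_i-a_{i-1}|=|\langle v_i-v_{i-1},u\rangle|\le |v_i-v_{i-1}|$ because $u$ is a unit vector, which upgrades the scalar estimate back to the vector statement and completes the reduction.

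The step I expect to be the main obstacle is the clean handling of the cyclic two-arc decomposition together with the vector-to-scalar projection: one must verify that the sign-change index $l$ exists (this is precisely where $\sum_i v_iw_i=0$ enters) and that splitting the cyclic total variation at $k$ and $l$ correctly yields two arcs each of variation at least $|v_k|$. The remaining identifications of $v_i$ with $D_s^m h_i$ and the telescoping sum are routine consequences of the recursive definition of $D_s$.
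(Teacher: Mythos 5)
Your proposal is correct. The skeleton matches the paper's: identify the quantities in the lemma as $v_i:=D_s^m h_i$ via the recursion, note that $v_iw_i$ (with $w_i=|e_i|$ or $\mu_i$) telescopes around the cycle to give $\sum_i v_iw_i=0$, and then deduce $\max_k|v_k|\le\frac12\sum_i|v_i-v_{i-1}|$. Where you diverge is in the proof of that last, purely combinatorial inequality. The paper writes $v_k=\frac{1}{\ell(c)}\sum_j(v_k-v_j)w_j$ and bounds each difference $|v_k-v_j|$ by half the full cyclic total variation, the factor $\frac12$ coming from averaging the two telescoping representations of $v_j-v_k$ (one along each arc from $k$ to $j$); it then sums these bounds against the weights $w_j/\ell(c)$. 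You instead project onto the unit vector $u=v_k/|v_k|$, use the weighted balance condition only to produce a single index $l$ with $\langle v_l,u\rangle\le 0$, and get the factor $\frac12$ from the fact that both arcs between $k$ and $l$ each have scalar total variation at least $|v_k|$. Your route isolates the key fact as a clean statement about cyclic vector sequences with vanishing weighted mean and proves it with a shorter, more transparent argument (no double sum, no per-$j$ averaging); the paper's version is a direct chain of estimates that stays closer to the explicit $D_s^{m-1}$ expressions. The edge cases you flag are handled correctly: $l\ne k$ since $a_k=|v_k|>0$ when $v_k\ne 0$, the two arcs partition the $n$ cyclic edges exactly, and $|a_i-a_{i-1}|\le|v_i-v_{i-1}|$ lifts the scalar bound back to vectors.
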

\begin{proof}
  This Lemma follows by direct estimation. Therefore, let $k\in\mathbb{Z}/n\mathbb{Z}$ and note that $\sum_{j=1}^{n}(D_s(h_{j+1})-D_s(h_{j}))=0$ as each term appears in the sum both in positive form and negative. Then, for odd $m$,
  {
    \allowdisplaybreaks
    \begin{align*}
      &\left|\frac{D_s^{m-1}(h_{k+1})-D_s^{m-1}(h_k)}{|e_k|}\right|\\
      &= \left|\frac{1}{\ell(c)}\sum_{j=1}^{n}(D_s^{m-1}(h_{j+1})-D_s^{m-1}(h_{j}))-\frac{D_s^{m-1}(h_{k+1})-D_s^{m-1}(h_k)}{|e_k|}\right|\\
      &=\left|\frac{1}{\ell(c)}\sum_{j=1}^{n}(D_s^{m-1}(h_{j+1})-D_s^{m-1}(h_{j}))-\frac{1}{\ell(c)}\sum_{j=1}^{n}\frac{D_s^{m-1}(h_{k+1})-D_s^{m-1}(h_k)}{|e_k|}|e_j|\right|\\
      &\leq\left|\frac{1}{\ell(c)}\sum_{j=1}^{n}\left(\frac{D_s^{m-1}(h_{j+1})-D_s^{m-1}(h_{j})}{|e_j|}-\frac{D_s^{m-1}(h_{k+1})-D_s^{m-1}(h_k)}{|e_k|}\right)|e_j|\right|\\
      &\leq\left|\frac{1}{\ell(c)}\sum_{j=1}^{n}\frac{1}{2}\left(\sum_{i=k}^{j-1} \left(\frac{D_s^{m-1}(h_{i+i})-D_s^{m-1}(h_i)}{|e_i|}-\frac{D_s^{m-1}(h_i)-D_s^{m-1}(h_{i-1})}{|e_{i-1}|}\right)\right.\right.\\
      &\left.\left.\qquad\qquad\qquad-\sum_{i=j}^{k-1} \left(\frac{D_s^{m-1}(h_{i+1})-D_s^{m-1}(h_i)}{|e_i|}-\frac{D_s^{m-1}(h_i)-D_s^{m-1}(h_{i-1})}{|e_{i-1}|}\right)\right)|e_j|\right|\\
      &\leq \frac{1}{2\ell(c)}\sum_{j=1}^n\sum_{i=1}^n\left|\frac{D_s^{m-1}(h_{i+1})-D_s^{m-1}(h_i)}{|e_i|}-\frac{D_s^{m-1}(h_i)-D_s^{m-1}(h_{i-1})}{|e_{i-1}|}\right||e_j|\\
      &\leq\frac{1}{2}\sum_{i=1}^n\left|\frac{D_s^{m-1}(h_{i+1})-D_s^{m-1}(h_i)}{|e_i|}-\frac{D_s^{m-1}(h_{i})-D_s^{m-1}(h_{i-1})}{|e_{i-1}|}\right|
    \end{align*}
  }
  The case for even $m$ is similar, swapping $|e_i|$ for $\mu_i$ and rotating some indices.
  {
    \allowdisplaybreaks
    \begin{align*}
      &\left|\frac{D_s^{m-1}(h_{k})-D_s^{m-1}(h_{k-1})}{\mu_k}\right|\\
      &= \left|\frac{1}{\ell(c)}\sum_{j=1}^{n}(D_s^{m-1}(h_{j})-D_s^{m-1}(h_{j-1}))-\frac{D_s^{m-1}(h_{k})-D_s^{m-1}(h_{k-1})}{\mu_k}\right|\\
      &=\left|\frac{1}{\ell(c)}\sum_{j=1}^{n}(D_s^{m-1}(h_{j})-D_s^{m-1}(h_{j-1}))-\frac{1}{\ell(c)}\sum_{j=1}^{n}\frac{D_s^{m-1}(h_{k})-D_s^{m-1}(h_{k-1})}{\mu_k}\mu_j\right|\\
      &\leq\left|\frac{1}{\ell(c)}\sum_{j=1}^{n}\left(\frac{D_s^{m-1}(h_{j})-D_s^{m-1}(h_{j-1})}{\mu_j}-\frac{D_s^{m-1}(h_{k})-D_s^{m-1}(h_{k-1})}{\mu_k}\right)\mu_j\right|\\
      &\leq\left|\frac{1}{\ell(c)}\sum_{j=1}^{n}\frac{1}{2}\left(\sum_{i=k}^{j-1} \left(\frac{D_s^{m-1}(h_{i+i})-D_s^{m-1}(h_i)}{\mu_{i+1}}-\frac{D_s^{m-1}(h_i)-D_s^{m-1}(h_{i-1})}{\mu_{i}}\right)\right.\right.\\
      &\left.\left.\qquad\qquad\qquad-\sum_{i=j}^{k-1} \left(\frac{D_s^{m-1}(h_{i+1})-D_s^{m-1}(h_i)}{\mu_{i+1}}-\frac{D_s^{m-1}(h_i)-D_s^{m-1}(h_{i-1})}{\mu_{i}}\right)\right)\mu_j\right|\\
      &\leq \frac{1}{2\ell(c)}\sum_{j=1}^n\sum_{i=1}^n\left|\frac{D_s^{m-1}(h_{i+1})-D_s^{m-1}(h_i)}{\mu_i}-\frac{D_s^{m-1}(h_i)-D_s^{m-1}(h_{i-1})}{\mu_{i-1}}\right|\mu_j\\
      &\leq\frac{1}{2}\sum_{i=1}^n\left|\frac{D_s^{m-1}(h_{i+1})-D_s^{m-1}(h_i)}{\mu_{i+1}}-\frac{D_s^{m-1}(h_{i})-D_s^{m-1}(h_{i-1})}{\mu_{i}}\right|
    \end{align*}
  }
  This proves the lemma.
\end{proof}

\begin{proof}[Proof of Lemma~\ref{metricequivalence}]
  We consider the odd case which is not meaningfully different from the even one. Following from Lemma \ref{clm:intermediatetwo}, we have
  {\allowdisplaybreaks
    \begin{align*}
      &\max_{i\in \mathbb{Z}/n\mathbb{Z}}\left|\frac{D^{m-1}(h_{k+1})-D^{m-1}(h_k)}{|e_k|}\right|^2\\&\leq \frac{1}{4}\left(\sum_{i=1}^{n}\left|\frac{D_s^{m-1}h_{i+1}-D_s^{m-1}(h_i)}{|e_{i}|}-\frac{D_s^{m-1}(h_i)-D_s^{m-1}(h_{i-1})}{|e_{i-1}|}\right|\right)^2\\
      &\leq \frac{1}{4}\left(\sum_{i=1}^{n}\left|\frac{D_s^{m-1}(h_{i+1})-D_s^{m-1}(h_i)}{|e_{i}|}-\frac{D_s^{m-1}(h_i)-D_s^{m-1}(h_{i-1})}{|e_{i-1}|}\right|^2 \frac{1}{\mu_i}\right)\left(\sum_{i=1}^{n} \mu_i \right)\\
      &=\frac{\ell(c)}{4}\left(\sum_{i=1}^{n}\left|\frac{D_s^{m-1}(h_{i+1})-D_s^{m-1}(h_i)}{|e_{i}|}-\frac{D_s^{m-1}(h_i)-D_s^{m-1}(h_{i-1})}{|e_{i-1}|}\right|^2 \frac{1}{\mu_i}\right).
    \end{align*}
  }
  Then, due to H\"older's inequality and the above, we conclude
  {
    \allowdisplaybreaks
    \begin{align*} \dot{\gsi}_c^m(h,h)&=\sum_{i=1}^n\ell(c)^{-3+2m}|D_s^m(h_i)|^2|e_i|\\
                                      &=\sum_{i=1}^n\ell^{-3+2m}\frac{|D_s^{m-1}(h_{i+1})-D_s^{m-1}(h_{i})|^2}{|e_i|}\\
                                      &\leq\ell(c)^{-3+2m}\left(\max_{k\in \mathcal{C}_n}\left|\frac{D_s^{m-1}(h_{k+1})-D_s^{m-1}(h_k)}{|e_k|}\right|^2\right)\left(\sum_{k=1}^{n}|e_k|\right)\\
                                      &\leq\ell(c)^{-3+2m}\frac{\ell(c)^2}{4}\sum_{i=1}^{n}\left|\frac{D_s^{m-1}(h_{i+1})-D_s^{m-1}(h_i)}{|e_{i}|}-\frac{D_s^{m-1}(h_i)-D_s^{m-1}(h_{i-1})}{|e_{i-1}|}\right|^2 \frac{1}{\mu_i}\\
                                      &=\frac{\ell(c)^{-3+2(m+1)}}{4}\sum_{i=1}^n|D_s^{m+1}(h_i)|^2\mu_i=\frac{1}{4}\dot{\gsi}_c^{m+1}(h,h).
    \end{align*}
  }
  This is parallel to the argument for the odd case, which we omit.
\end{proof}

\bibliographystyle{abbrv}
\bibliography{references.bib}

\begin{thebibliography}{10}

\bibitem{atkin1975hopf}
C.~J. Atkin.
\newblock The hopf-rinow theorem is false in infinite dimensions.
\newblock {\em Bulletin of the London Mathematical Society}, 7(3):261–266,
  1975.

\bibitem{atkin1997geodesic}
C.~J. Atkin.
\newblock Geodesic and metric completeness in infinite dimensions.
\newblock {\em Hokkaido Mathematical Journal}, 26(1):1–61, 1997.

\bibitem{bauer2012vanishing}
M.~Bauer, M.~Bruveris, P.~Harms, and P.~W. Michor.
\newblock Vanishing geodesic distance for the riemannian metric with geodesic
  equation the kdv-equation.
\newblock {\em Annals of Global Analysis and Geometry}, 41:461–472, 2012.

\bibitem{bauer2018soliton}
M.~Bauer, M.~Bruveris, P.~Harms, and P.~W. Michor.
\newblock Soliton solutions for the elastic metric on spaces of curves.
\newblock {\em Discrete and Continuous Dynamical Systems}, 38(3):1161–1185,
  2018.

\bibitem{bauer2017numerical}
M.~Bauer, M.~Bruveris, P.~Harms, and J.~Møller-Andersen.
\newblock {A Numerical Framework for Sobolev Metrics on the Space of Curves}.
\newblock {\em SIAM Journal on Imaging Sciences}, 10(1):47–73, 2017.

\bibitem{bauer2018fractional}
M.~Bauer, M.~Bruveris, and B.~Kolev.
\newblock Fractional sobolev metrics on spaces of immersed curves.
\newblock {\em Calculus of Variations and Partial Differential Equations},
  57:1--24, 2018.

\bibitem{bauer2014overview}
M.~Bauer, M.~Bruveris, and P.~W. Michor.
\newblock {O}verview of the {G}eometries of {S}hape {S}paces and
  {D}iffeomorphism {G}roups.
\newblock {\em Journal of Mathematical Imaging and Vision}, 50(1):60–97,
  2014.

\bibitem{bauer2015metrics}
M.~Bauer and P.~Harms.
\newblock {Metrics on spaces of immersions where horizontality equals
  normality}.
\newblock {\em Differential Geometry and its Applications}, 39:166–183, 2015.

\bibitem{bauer2020vanishing}
M.~Bauer, P.~Harms, and S.~C. Preston.
\newblock Vanishing distance phenomena and the geometric approach to sqg.
\newblock {\em Archive for Rational Mechanics and Analysis},
  235(3):1445–1466, 2020.

\bibitem{bauer2024completeness}
M.~Bauer, P.~Heslin, and C.~Maor.
\newblock Completeness and geodesic distance properties for fractional sobolev
  metrics on spaces of immersed curves.
\newblock {\em The Journal of Geometric Analysis}, 34(7):214, 2024.

\bibitem{bauer2022sobolev}
M.~Bauer, C.~Maor, and P.~Michor.
\newblock Sobolev metrics on spaces of manifold valued curves.
\newblock {\em Annali Scuola Normale Superiore-Classe di Scienze},
  XXIV:1895--1948, 2023.

\bibitem{bernal2016fast}
J.~Bernal, G.~Dogan, and C.~R. Hagwood.
\newblock Fast dynamic programming for elastic registration of curves.
\newblock In {\em Proceedings of the IEEE Conference on Computer Vision and
  Pattern Recognition Workshops}, page 111–118, 2016.

\bibitem{beutler2024discrete}
S.~Beutler, F.~Hartwig, M.~Rumpf, and B.~Wirth.
\newblock Discrete geodesic calculus in the space of sobolev curves.
\newblock {\em In preparation}, 2024.

\bibitem{bruveris2015completeness}
M.~Bruveris.
\newblock Completeness properties of sobolev metrics on the space of curves.
\newblock {\em Journal of Geometric Mechanics}, 7(2):125–150, 2015.

\bibitem{bruveris2014geodesic}
M.~Bruveris, P.~W. Michor, and D.~Mumford.
\newblock {G}eodesic {C}ompleteness for {S}obolev {M}etrics on the {S}pace of
  {I}mmersed {P}lane {C}urves.
\newblock In {\em Forum of Mathematics, Sigma}, volume~2. Cambridge University
  Press, 2014.

\bibitem{bruveris2017completeness}
M.~Bruveris and F.-X. Vialard.
\newblock On completeness of groups of diffeomorphisms.
\newblock {\em Journal of the European Mathematical Society},
  19(5):1507–1544, 2017.

\bibitem{celledoni2018shape}
E.~Celledoni, S.~Eidnes, and A.~Schmeding.
\newblock Shape analysis on homogeneous spaces: a generalised srvt framework.
\newblock In {\em Computation and Combinatorics in Dynamics, Stochastics and
  Control: The Abel Symposium, Rosendal, Norway, August 2016}, page 187–220.
  Springer, 2018.

\bibitem{crane2018discrete}
K.~Crane.
\newblock Discrete differential geometry: An applied introduction.
\newblock {\em Notices of the AMS, Communication}, 1153, 2018.

\bibitem{crane2013digital}
K.~Crane, F.~{De Goes}, M.~Desbrun, and P.~Schröder.
\newblock Digital geometry processing with discrete exterior calculus.
\newblock In {\em ACM SIGGRAPH 2013 Courses}, page 1–126. 2013.

\bibitem{ekeland1978hopf}
I.~Ekeland.
\newblock The hopf-rinow theorem in infinite dimension.
\newblock {\em Journal of Differential Geometry}, 13(2):287–301, 1978.

\bibitem{habermann2024long}
K.~Habermann, P.~Harms, and S.~Sommer.
\newblock Long-time existence of brownian motion on configurations of two
  landmarks.
\newblock {\em Bulletin of the London Mathematical Society}, 56(5):1658–1679,
  2024.

\bibitem{kendall1984shape}
D.~G. Kendall.
\newblock Shape manifolds, {P}rocrustean metrics, and complex projective
  spaces.
\newblock {\em Bull. London Math. Soc}, 16:81–121, 1984.

\bibitem{lang1972differential}
S.~Lang.
\newblock {\em Differential manifolds}, volume~2.
\newblock Springer, 1972.

\bibitem{michor2005vanishing}
P.~W. Michor and D.~Mumford.
\newblock {V}anishing {G}eodesic {D}istance on {S}paces of {S}ubmanifolds and
  {D}iffeomorphisms.
\newblock {\em Documenta Mathematica}, 10:217–245, 2005.

\bibitem{michor2007overview}
P.~W. Michor and D.~Mumford.
\newblock {A}n {O}verview of the {R}iemannian {M}etrics on {S}paces of {C}urves
  {U}sing the {H}amiltonian {A}pproach.
\newblock {\em Applied and Computational Harmonic Analysis}, 23(1):74–113,
  2007.

\bibitem{mumford2006riemannian}
D.~B. Mumford and P.~W. Michor.
\newblock Riemannian geometries on spaces of plane curves.
\newblock {\em Journal of the European Mathematical Society}, 8(1):1–48,
  2006.

\bibitem{needham2020simplifying}
T.~Needham and S.~Kurtek.
\newblock {S}implifying {T}ransforms for {G}eneral {E}lastic {M}etrics on the
  {S}pace of {P}lane {C}urves.
\newblock {\em {SIAM} journal on imaging sciences}, 13(1):445–473, 2020.

\bibitem{pennec2019riemannian}
X.~Pennec, S.~Sommer, and T.~Fletcher.
\newblock {\em {R}iemannian {G}eometric {S}tatistics in {M}edical {I}mage
  {A}nalysis}.
\newblock Academic Press, 2019.

\bibitem{srivastava2010shape}
A.~Srivastava, E.~Klassen, S.~H. Joshi, and I.~H. Jermyn.
\newblock {Shape Analysis of Elastic Curves in Euclidean Spaces}.
\newblock {\em IEEE Transactions on Pattern Analysis and Machine Intelligence},
  33(7):1415–1428, 2010.

\bibitem{srivastava2016functional}
A.~Srivastava and E.~P. Klassen.
\newblock {\em {F}unctional and {S}hape {D}ata {A}nalysis}, volume~1.
\newblock Springer, 2016.

\bibitem{sundaramoorthi2007sobolev}
G.~Sundaramoorthi, A.~Yezzi, and A.~C. Mennucci.
\newblock Sobolev active contours.
\newblock {\em International Journal of Computer Vision}, 73:345–366, 2007.

\bibitem{younes2010shapes}
L.~Younes.
\newblock {\em {S}hapes and {D}iffeomorphisms}, volume 171.
\newblock Springer, 2010.

\end{thebibliography}
\end{document}